\documentclass[preprint,authoryear,5p,times]{elsarticle}
\usepackage{amssymb}
\usepackage{float}
\usepackage{amsmath}
\usepackage{amssymb}
\usepackage{amsthm}
\usepackage{mathrsfs}
\usepackage[mathscr]{euscript}
\usepackage[ruled,vlined]{algorithm2e}
\usepackage{arydshln}
\usepackage{multirow}
\usepackage{multicol}
\usepackage{lineno,hyperref}
%\modulolinenumbers[5]

\usepackage{xcolor}
\hypersetup{
    colorlinks,
    linkcolor={blue},
    citecolor={blue},
    urlcolor={blue}
}

\bibliographystyle{elsarticle-harv}

\journal{Advances in Space Research}

\newtheorem{theorem}{Theorem}[section]

\newtheorem{assumption}[theorem]{Assumption}
\newtheorem{proposition}[theorem]{Proposition}

\theoremstyle{remark}
\newtheorem{remark}[theorem]{Remark}
\theoremstyle{definition}

\begin{document}

\begin{frontmatter}

\title{Multiple-impulse orbital maneuver with limited observation window}

\author{Amir Shakouri\fnref{label01}}
\address{Department of Aerospace Engineering, Sharif University of Technology, Tehran 14588-89694, Iran}
\fntext[label01]{Corresponding author. Email addresses: \href{mailto:a_shakouri@outlook.com}{a$\_$shakouri@outlook.com} (A. Shakouri), \href{mailto:pourtak@sharif.edu}{pourtak@sharif.edu} (S. H. Pourtakdoust), \href{mailto:msayanjali@gmail.com}{msayanjali@gmail.com} (M. Sayanjali).}
\author{Seid H. Pourtakdoust}
\address{Center for Research and Development in Space Science and Technology, Sharif University of Technology, Tehran 14588-89694, Iran}
\author{Mohammad Sayanjali}
\address{Satellite Research Institute, Tehran 19979-94313, Iran}

\begin{abstract}

This paper proposes a solution for multiple-impulse orbital maneuvers near circular orbits for special cases where orbital observations are not globally available and the spacecraft is being observed through a limited window from a ground or a space-based station. The current study is particularly useful for small private launching companies with limited access to global observations around the Earth and/or for orbital maneuvers around other planets for which the orbital observations are limited to the in situ equipment. An appropriate cost function is introduced for the sake of minimizing the total control/impulse effort as well as the orbital uncertainty. It is subsequently proved that for a circle-to-circle maneuver, the optimization problem is quasi-convex with respect to the design variables. For near circular trajectories the same cost function is minimized via a gradient based optimization algorithm in order to provide a sub-optimal solution that is efficient both with respect to energy effort and orbital uncertainty. As a relevant case study, a four-impulse orbital maneuver between circular orbits under Mars gravitation is simulated and analyzed to demonstrate the effectiveness of the proposed algorithm.

\end{abstract}

\begin{keyword}
Orbital maneuver \sep uncertainty \sep covariance \sep optimization
\end{keyword}

\end{frontmatter}

%\linenumbers

\section{Introduction}
\label{S:1}

Impulsive orbital maneuvers (IOM) have always been a challenging issue in astrodynamics. In 1925 the well-known two-impulse maneuver for transfer between coplanar orbits was first proposed by Walter Hohmann \citep{in1} that was demonstrated to be an optimal solution for the unconstrained transfer problem. Subsequently, many researches have focused on IOM in order to propose and improve the solutions for more complex and challenging situations by considering different constraints on the problem \citep{in2,in3,in4,in5,in6,in7,in7p}, while some researches have tried to find more simplified methods to solve the multiple-impulse orbital maneuver (MIOM) with less computational efforts \citep{in8,in9,in10}. 

On the other hand, the Lambert's approach \citep{gooding1990procedure,albouy2019lambert} has traditionally been utilized to establish conic trajectories between any two spatial points in space within a predefined time interval that can be directly used as the maneuver trajectory, as well. Several solution methods are proposed in the literature to enhance the speed and accuracy of the early algorithms in which the reader can refer to \citep{leeghim2010energy,de2018solution,russell2019solution} and the references therein. Many enhanced versions over the classical Lambert's method have emerged since its original introduction due to its vast applicability for multiple-revolution \citep{in14,in15,in16}, perturbed \citep{in17,in18}, and optimized transfer solutions \citep{in19} in various related contexts. 

However, the majority of the state-of-the-art IOM methods do not consider the issue of realistic uncertainties such as measurement and process noise, actuation errors, etc. The inherent nature of the stochastic uncertainty could indeed affect the mission design (MD) parameters that are usually not considered at the initial MD stages. Of course there exist powerful estimation and filtering techniques that can compensate for the role of uncertainty with acceptable accuracy for offline MD when the system is fully observable. On the other hand, the uncertainty problem persists when there are lack of sufficient observability level and time, where state estimation will no longer produce a converged solution to an acceptable error bound. In these scenarios, the estimation uncertainty, modeled via the system covariance matrix, can be considered as an affecting tool in the process of MD for orbital maneuvers. The continuous-thrust two-dimensional coplanar orbital maneuvers under poor measurements are studied in \citep{in20} where the diagonal elements of the covariance matrix are augmented in the cost function to be minimized alongside the control effort. For spacecraft rendezvous, a same approach for the impulsive maneuvers under uncertainty is discussed in \citep{in21} where a multi-objective unconstrained optimization approach is implemented and analyzed. In \citep{in22} the multiple-impulse rendezvous problem is studied by proposing a covariance minimization approach under several constraints on the maximum control effort, maximum thruster limit, and maximum flight time while satisfying some safe approach corridors.

This current study investigates the MIOM problem under a special realistic assumption that the measurements are not available at all times and the state estimation cannot retain its convergence beyond the observation window (see Fig. \ref{fig:0p}). In this situation, thrust actuation cannot be performed in the blind regions for sure and nevertheless, the impulses should not be applied at the early times of entering the observation window as well, as obviously the state variables need a sufficient time for convergence trough the filtering process. Therefore, a cost function is introduced in which by its minimization the sum of impulses will be reduced and also enough time will be given to the system for the relaxation of its estimation errors. It is also demonstrated that the cost function is quasi-convex for two-impulse circle-to-circle maneuvers. For MIOMs, the transfer trajectory is approximated by several two-impulse circle-to-circle maneuvers and a gradient based optimization technique is implemented to establish a solution. The Lambert's algorithm is also used for trajectory generation and calculation of the cost function in which the impulse positions and impulse times are considered as the optimization (design) variables. 

\begin{figure}[h]
\centering\includegraphics[width=1\linewidth]{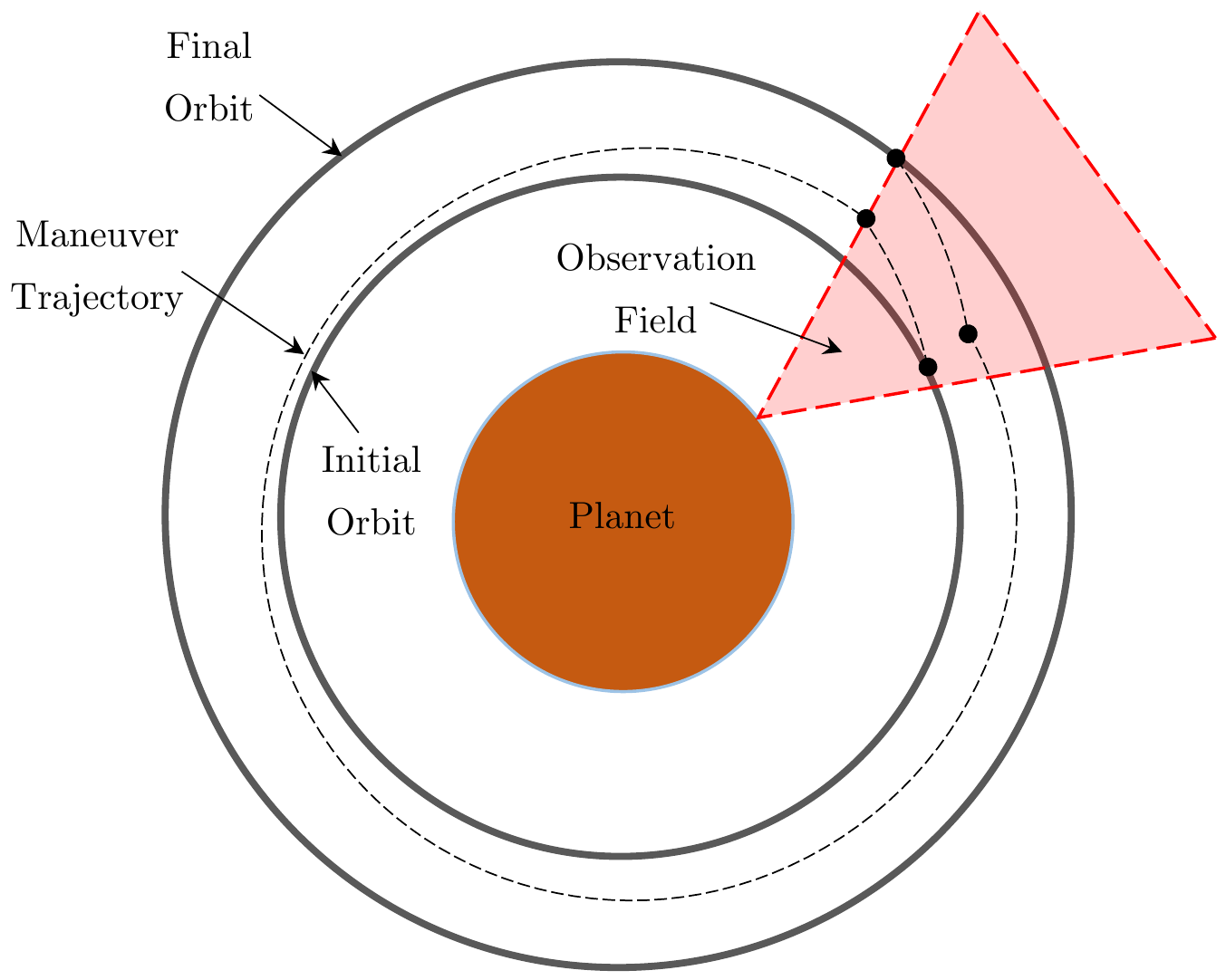}
\caption{Schematic geometric view of the problem discussed in this paper.}
\label{fig:0p}
\end{figure}

The remaining parts of this paper are organized as follows: In Section \ref{S:2}, the system is modeled, the Lambert's algorithm is inserted to the formulations, and the actuated system is introduced. Section \ref{S:3} is devoted to the covariance analysis and propagation in a simplified case and some results are presented. The optimization and simulations are performed in Section \ref{S:4}. Finally, concluding remarks and future directives are presented in Section \ref{S:5}.

\section{Orbital Dynamics and Impulsive Control}
\label{S:2}

The spacecraft dynamics is assumed to be the unperturbed two-body problem. Let $\textbf{r},\textbf{v},\textbf{a}\in\mathbb{R}^3$ denote the position, velocity, and acceleration of the spacecraft in an Earth-centered inertial coordinate system, respectively, and $t\in[0,\infty)$ denote the time. Therefore:
\begin{equation}
\label{eq:1}
\textbf{a}(t)=-\frac{\mu}{r(t)^3}\textbf{r}(t)
\end{equation}
in which $r=\|\textbf{r}\|$ stands for the Euclidean norm. The following discrete-time form is obtainable using Eq. \eqref{eq:1}: 
\begin{equation}
\label{eq:2}
\textbf{r}_{i+1}=\textbf{f}_r\left(\textbf{r}_i,\textbf{v}_i,\delta t\right)
\end{equation}
\begin{equation}
\label{eq:3}
\textbf{v}_{i+1}=\textbf{f}_v\left(\textbf{r}_i,\textbf{v}_i,\delta t\right)
\end{equation}
where $\textbf{r}_i,\textbf{v}_i\in\mathbb{R}^3$ are the position and velocity vectors of the spacecraft at step $i$ which occurs at $t_i=t_{i-1}+\delta t_{i-1,i}\in[0,\infty)$. Functions $\textbf{f}_r(\cdot,\cdot,\cdot),\textbf{f}_v(\cdot,\cdot,\cdot):\mathbb{R}^3\times\mathbb{R}^3\times[0,\infty)\mapsto\mathbb{R}^3$ are defined as below: 
\begin{equation}
\label{eq:4}
\textbf{f}_r(\textbf{r}_i,\textbf{v}_i,t)=\int_t\textbf{f}_v(\textbf{r}_i,\textbf{v}_i,\tau)d\tau+\textbf{r}_i
\end{equation}
\begin{equation}
\label{eq:5}
\textbf{f}_v(\textbf{r}_i,\textbf{v}_i,t)=\int_t\textbf{a}(\tau)d\tau+\textbf{v}_i
\end{equation}

Let us introduce two symbols of $\textbf{v}_i^-,\textbf{v}_i^+\in\mathbb{R}^3$ in order to denote the spacecraft velocity vector before and after applying the impulse vector, respectively. So,
\begin{equation}
\label{eq:6}
\textbf{v}_i^+=\textbf{v}_i^-+\delta\textbf{v}_i
\end{equation}
in which $\delta\textbf{v}_i\in\mathbb{R}^3$ is the impulse vector at step $i$. Using the above notation, the actuated analog of Eqs. \eqref{eq:2} and \eqref{eq:3} can be written as: 
\begin{equation}
\label{eq:7}
\textbf{r}_{i+1}=\textbf{f}_r\left(\textbf{r}_i,\textbf{v}_i^+,\delta t_{i,i+1}\right)\equiv \textbf{f}_r\left(\textbf{r}_i,\textbf{v}_i^-+\delta\textbf{v}_i,\delta t_{i,i+1}\right)
\end{equation}
\begin{equation}
\label{eq:8}
\textbf{v}_{i+1}^-=\textbf{f}_v\left(\textbf{r}_i,\textbf{v}_i^+,\delta t_{i,i+1}\right)\equiv\textbf{f}_v\left(\textbf{r}_i,\textbf{v}_i^-+\delta\textbf{v}_i,\delta t_{i,i+1}\right)
\end{equation}

It is presumed that both the initial and final orbits rotate clockwise (CW) or counterclockwise (CCW) which is the case for almost all IOMs. Knowing the values of $\textbf{r}_i$, $\textbf{r}_{i+1}$, and $\delta t_{i,i+1}$, the velocity vectors of $\textbf{v}_i^+$ and $\textbf{v}_{i+1}^-$ can be obtained by implementation of Lambert's algorithm (Algorithm 54 in \citep{vallado}). First, Consider the following assumption. 

\begin{assumption}
\label{ass:1}
The angle between two subsequent impulse positions is less than $\pi$, i.e., $\forall i\in\mathbb{N}:|\angle(\textbf{r}_i,\textbf{r}_{i+1})|<\pi$. In other words, dividing the $\mathbb{R}^2$ space into two subsets $\mathcal{R}_1=\{\textbf{r}\in\mathbb{R}^2:|\angle(\textbf{r},\textbf{r}_{i+1})|+|\angle(\textbf{r},\textbf{r}_{i})|=|\angle(\textbf{r}_i,\textbf{r}_{i+1})|\}$ and $\mathcal{R}_2=\mathbb{R}^2-\mathcal{R}_1$, if for a two-body dynamics with the initial conditions $\textbf{r}_i$ and $\textbf{v}_{i}^-$ the trajectory enters $\mathcal{R}_1$, then it is said that this assumption is satisfied. Otherwise, this assumption is not satisfied. See Fig. \ref{fig:ass1} 
\end{assumption}

\begin{figure}[h]
\centering\includegraphics[width=0.5\linewidth]{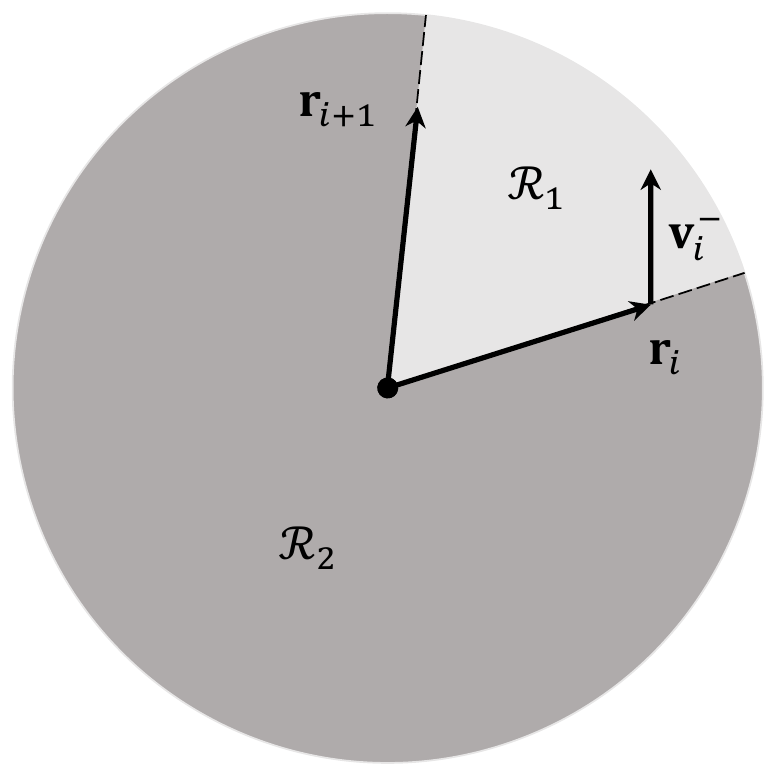}
\caption{Schematic figure for Assumption \ref{ass:1}.}
\label{fig:ass1}
\end{figure}

The Lambert's algorithm can give the velocity vectors of a trajectory that goes through $\textbf{r}_i$ and $\textbf{r}_{i+1}$ with a time interval of $\delta t_{i,i+1}$. The output of the Lambert's algorithm is unique under Assumption \ref{ass:1} (for more details see \citep{simo1973solucion}). Let $\textbf{L}(\cdot,\cdot,\cdot):\mathcal{D}_L\mapsto\mathbb{R}^3$ be a function that employs the Lambert's algorithm where $\mathcal{D}_L\subset\mathbb{R}^3\times\mathbb{R}^3\times[0,\infty)$ is considered such that Assumption \ref{ass:1} holds:
\begin{equation}
\label{eq:9}
\textbf{v}_i^+=\textbf{L}\left(\textbf{r}_i,\textbf{r}_{i+1},\delta t_{i,i+1}\right)
\end{equation}

Therefore, using Eqs. \eqref{eq:9} and \eqref{eq:6} for a two-impulse maneuver from $\textbf{r}_i,\textbf{v}_i^-$ to $\textbf{r}_{i+1},\textbf{v}_{i+1}^+$ in a time interval of $\delta t_{i,i+1}$, the first impulse vector can be obtained as follows: 
\begin{equation}
\label{eq:10}
\delta\textbf{v}_i=\textbf{L}\left(\textbf{r}_i,\textbf{r}_{i+1},\delta t_{i,i+1}\right)-\textbf{v}_i^-
\end{equation}
and using Eqs. \eqref{eq:8}, \eqref{eq:9}, and \eqref{eq:6} the second impulse vector is:
\begin{equation}
\label{eq:11}
\delta\textbf{v}_{i+1}=\textbf{v}_{i+1}^+-\textbf{f}_v\left(\textbf{r}_i,\textbf{L}\left(\textbf{r}_i,\textbf{r}_{i+1},\delta t_{i,i+1}\right),\delta t_{i,i+1}\right)
\end{equation}

On the other hand, to handle those cases where Assumption \ref{ass:1} is not satisfied (i.e., $|\angle(\textbf{r}_i,\textbf{r}_{i+1})|\in(\pi,2\pi)$), first we need to consider the following proposition:

\begin{proposition}
\label{prop:1}
Let $\textbf{r}_{ac}$ denote the position where orbits (a) and (c) intersect and similarly, $\textbf{r}_{bc}$ denote the position where orbits (a) and (b) intersect which are shown in Fig. \ref{fig:001p}. Consider a spacecraft decides to travel from orbit (a) to (b) using an arc of orbit (c). Then, the impulse magnitudes are equal in the following scenarios: 
\begin{enumerate}
\item Orbits (a) and (b) are CCW and for a two-impulse maneuver between $\textbf{r}_{ac}$ and $\textbf{r}_{bc}$, an arc of orbit (c) is used which has a CCW rotation (i.e., orbit (c)--Traj. (1) in Fig. \ref{fig:001p}).
\item Orbits (a) and (b) are CW and for a two-impulse maneuver between $\textbf{r}_{ac}$ and $\textbf{r}_{bc}$, an arc of orbit (c) is used which has a CW rotation (i.e., orbit (c)--Traj. (2) in Fig. \ref{fig:001p}).
\end{enumerate}
\end{proposition}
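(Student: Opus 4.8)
The plan is to derive the result from a symmetry of the Keplerian two-body dynamics. Because the acceleration in Eq. \eqref{eq:1} is central ($\textbf{a}$ is antiparallel to $\textbf{r}$ and depends only on $r=\|\textbf{r}\|$), the equations of motion are equivariant under every orthogonal transformation of the orbital plane that fixes the attracting center. In particular, for a reflection $\textbf{R}$ (a linear isometry with $\det\textbf{R}=-1$) about a line through the origin, if $(\textbf{r}(t),\textbf{v}(t))$ is a trajectory then so is $(\textbf{R}\textbf{r}(t),\textbf{R}\textbf{v}(t))$, traversed over the identical time interval. The single property I will lean on is that $\textbf{R}$ reverses orientation -- it turns a CCW orbit into a CW orbit and back -- while preserving all Euclidean lengths.

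First I would realize Scenario 2 as the image of Scenario 1 under such a reflection. Applying $\textbf{R}$ to the whole CCW configuration (orbits (a) and (b) together with the CCW transfer arc of orbit (c), i.e. Traj. (1)) yields a configuration in which every orbit is now traversed CW; this is precisely the configuration of Scenario 2, with the intersection points $\textbf{r}_{ac},\textbf{r}_{bc}$ carried to the intersection points of the reflected orbits and the transfer arc carried to the CW arc Traj. (2). The reflection is chosen so that the reflected orbits coincide, as point sets, with orbits (a), (b), (c), which is why the same labels describe both scenarios and only the sense of rotation changes.

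Next I would transport the two impulses. Writing the first impulse of Scenario 1 as $\delta\textbf{v}=\textbf{v}^{+}-\textbf{v}^{-}$, where $\textbf{v}^{-}$ is the velocity on orbit (a) and $\textbf{v}^{+}=\textbf{L}(\textbf{r}_{ac},\textbf{r}_{bc},\delta t)$ is the Lambert departure velocity on orbit (c) at $\textbf{r}_{ac}$ (cf. Eq. \eqref{eq:10}), both velocities are multiplied by $\textbf{R}$ under the reflection, so the corresponding impulse in Scenario 2 is $\textbf{R}\textbf{v}^{+}-\textbf{R}\textbf{v}^{-}=\textbf{R}\,\delta\textbf{v}$. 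Since $\textbf{R}$ is an isometry, $\|\textbf{R}\,\delta\textbf{v}\|=\|\delta\textbf{v}\|$; the identical computation at $\textbf{r}_{bc}$ using Eq. \eqref{eq:11} equates the second impulse magnitudes. Hence the two maneuvers have the same individual -- and therefore total -- impulse magnitudes, which is the assertion.

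The step I expect to be the real obstacle is the geometric matching in the second paragraph: one must check that the reflected CCW arc is exactly the arc that Scenario 2's construction produces, i.e. that the Lambert solution joining the reflected endpoints over the same flight time returns the reflected conic rather than a different conic through those two points. This is where the uniqueness of the Lambert output under Assumption \ref{ass:1} (noted after Eq. \eqref{eq:9}, cf. \citep{simo1973solucion}) must be invoked, together with the observation that reflection maps a sub-$\pi$ transfer angle to a sub-$\pi$ transfer angle so that Assumption \ref{ass:1} is preserved. Once this identification of the two transfer arcs is secured, equality of the impulse magnitudes follows immediately from the length-preserving property of $\textbf{R}$, with no further computation.
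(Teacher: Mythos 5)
Your argument stands or falls on the existence of the reflection $\textbf{R}$ in your second paragraph: an orientation-reversing isometry of the orbital plane, fixing the attracting center, that maps each of the orbits (a), (b), (c) onto \emph{itself} as a point set and carries $\textbf{r}_{ac}$ and $\textbf{r}_{bc}$ to themselves. Such an $\textbf{R}$ does not exist in general. A reflection about a line through the focus maps a Keplerian conic with that focus onto itself only when the line is that conic's own apse line (reflection about the minor axis or rotation by $\pi$ swaps the foci, hence moves the attracting center). So your $\textbf{R}$ can exist only if (a), (b), and (c) share a common apse line, and fixing the two junction points further forces $\textbf{r}_{ac}$ and $\textbf{r}_{bc}$ to lie on that line --- i.e., a tangential, Hohmann-type geometry. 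The configuration of Proposition \ref{prop:1} is not of this type: in Fig. \ref{fig:001p} the orbits cross transversally, and the proposition is invoked inside Algorithm \ref{al:1} for arbitrary $\textbf{r}_i$, $\textbf{r}_{i+1}$, $\delta t_{i,i+1}$ whenever Assumption \ref{ass:1} fails. For a generic configuration, reflecting Scenario 1 produces a mirror-image problem on \emph{different} orbits with \emph{different} intersection points, so your isometry argument compares Scenario 1 with that mirror problem rather than with Scenario 2. The Lambert-uniqueness point you single out as the main obstacle is secondary; the real obstacle is that the transport map itself is unavailable.

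The orientation-reversing symmetry that actually does the job is time reversal, not spatial reflection: if $\textbf{r}(t)$ solves Eq. \eqref{eq:1}, so does $\textbf{r}(-t)$, which traverses the \emph{same} geometric orbit in the opposite sense and has velocity $-\textbf{v}$ at every fixed point where the original has velocity $\textbf{v}$. This map fixes all three orbits as point sets and fixes $\textbf{r}_{ac}$, $\textbf{r}_{bc}$ exactly --- the two properties your reflection cannot deliver --- and it is insensitive to the fact that Traj.~(1) and Traj.~(2) are complementary arcs of orbit (c), since the endpoint velocities depend only on the orbit and the sense of rotation, not on which arc is flown. This is precisely the paper's proof: writing the CW velocities as $\textbf{v}_{ac(a)},\textbf{v}_{ac(c)},\textbf{v}_{bc(c)},\textbf{v}_{bc(b)}$, the CCW velocities at the same points are their negatives, so $\delta\textbf{v}_{1(CCW)}=-\delta\textbf{v}_{1(CW)}$ and $\delta\textbf{v}_{2(CCW)}=-\delta\textbf{v}_{2(CW)}$, and the magnitudes coincide; this is also exactly what Algorithm \ref{al:1} implements operationally ($\textbf{v}_i^-\gets-\textbf{v}_i^-$, solve, then negate the outputs). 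Your closing step --- the symmetry preserves norms --- survives verbatim once the transport step is replaced by pointwise velocity negation. (In the special coaxial case your reflection does exist and your proof then goes through, but that degenerate case is not the generality the proposition requires.)
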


\begin{proof}
Suppose the velocity of orbit (a) in $\textbf{r}_{ac}$ is shown by $\textbf{v}_{ac(a)}$ or $-\textbf{v}_{ac(a)}$ when the rotation is CW or CCW, respectively. This rule is then used to denote the rest of the velocities. In scenario (2) the impulse magnitudes are $\|\delta\textbf{v}_{1(CW)}\|=\|\textbf{v}_{ac(c)}-\textbf{v}_{ac(a)}\|$ and $\|\delta\textbf{v}_{2(CW)}\|=\|\textbf{v}_{bc(b)}-\textbf{v}_{bc(c)}\|$. In scenario (1) the impulse magnitudes are $\|\delta\textbf{v}_{1(CCW)}\|=\|-\textbf{v}_{ac(c)}+\textbf{v}_{ac(a)}\|$ and $\|\delta\textbf{v}_{2(CCW)}\|=\|-\textbf{v}_{bc(b)}+\textbf{v}_{bc(c)}\|$. Therefore, $\|\delta\textbf{v}_{1(CW)}\|=\|\delta\textbf{v}_{1(CCW)}\|$ and $\|\delta\textbf{v}_{2(CW)}\|=\|\delta\textbf{v}_{2(CCW)}\|$, and the statement is proved. 
\end{proof}

\begin{figure}[h]
\centering\includegraphics[width=1\linewidth]{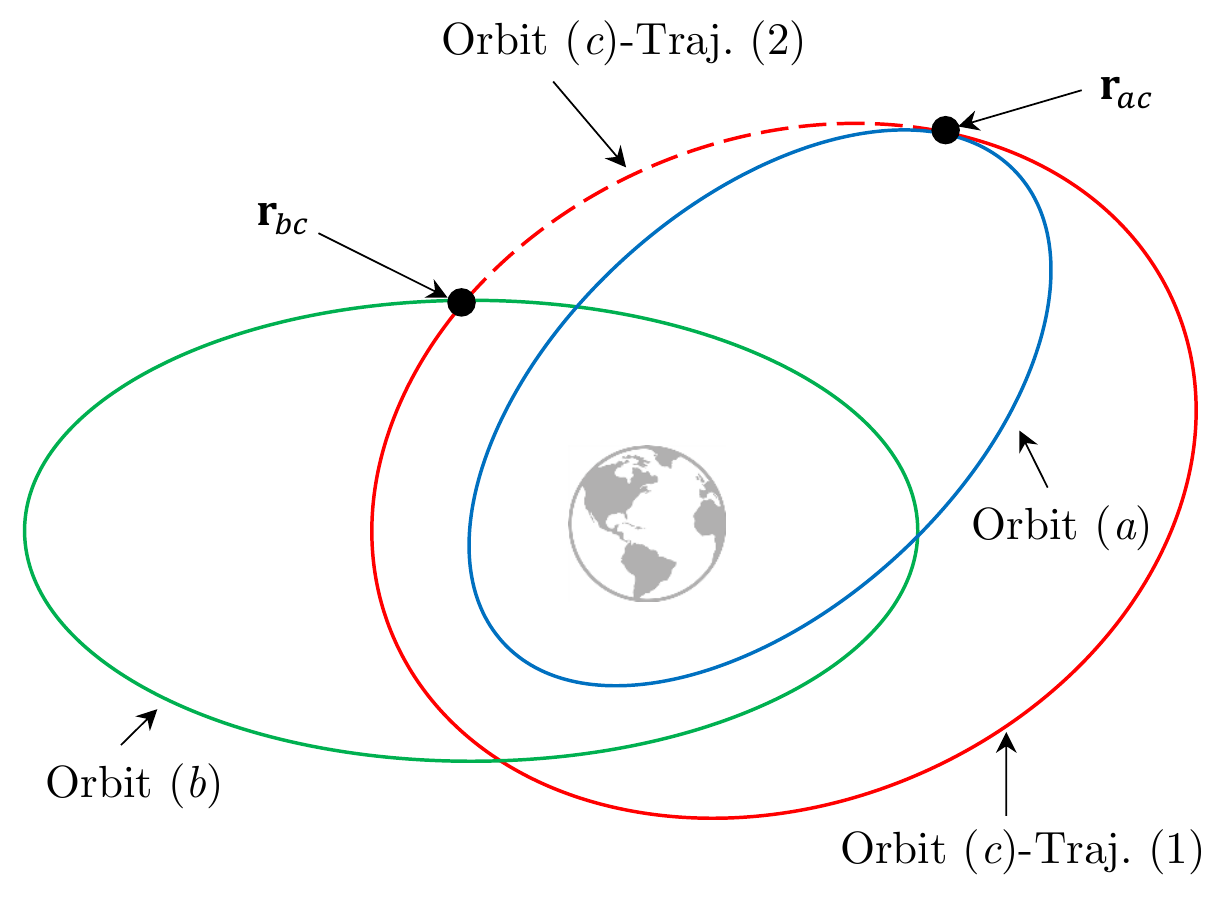}
\caption{Visualization of the parameters used in Proposition \ref{prop:1}.}
\label{fig:001p}
\end{figure}

According to Proposition \ref{prop:1}, if Assumption \ref{ass:1} is not satisfied, then the problem can be viewed as an equivalent problem at which Assumption \ref{ass:1} is satisfied. The procedure for the calculation of an impulse vector is summarized in Algorithm \ref{al:1} and depicted in Fig. \ref{fig:001}. Algorithm \ref{al:2} presents the extended procedure for the case of MIOM. 

\begin{algorithm}
\label{al:1}
\caption{An algorithm based on Lambert's problem for impulse generation.}
\SetAlgoLined
\textbf{Input:} Initial position $\textbf{r}_i$; final position $\textbf{r}_{i+1}$; transfer time $\delta t_{i,i+1}$; and initial velocity $\textbf{v}^-_i$.\\
\textbf{Output:} Impulse vector $\delta \textbf{v}_i$; and the final velocity $\textbf{v}_{i+1}^-$.\\
\If {Assumption \ref{ass:1} is satisfied}{
1. Use Eq. \eqref{eq:10} and calculate $\delta \textbf{v}_i$.\\
2. Use Eq. \eqref{eq:8} and calculate $\textbf{v}_{i+1}^-$. \\
}
\If {Assumption \ref{ass:1} is not satisfied}{
1. $\textbf{v}^-_i\gets-\textbf{v}^-_i$\\
2. Use Eq. \eqref{eq:10} and calculate $\delta \textbf{v}_i$.\\
3. Use Eq. \eqref{eq:8} and calculate $\textbf{v}_{i+1}^-$. \\
4. $\delta \textbf{v}_i\gets-\delta \textbf{v}_i$ \\
5. $\textbf{v}_{i+1}^-\gets-\textbf{v}_{i+1}^-$
}
\textbf{Return:} $\delta \textbf{v}_i$, $\textbf{v}_{i+1}^-$.
\end{algorithm}

\begin{figure}[h]
\centering\includegraphics[width=0.7\linewidth]{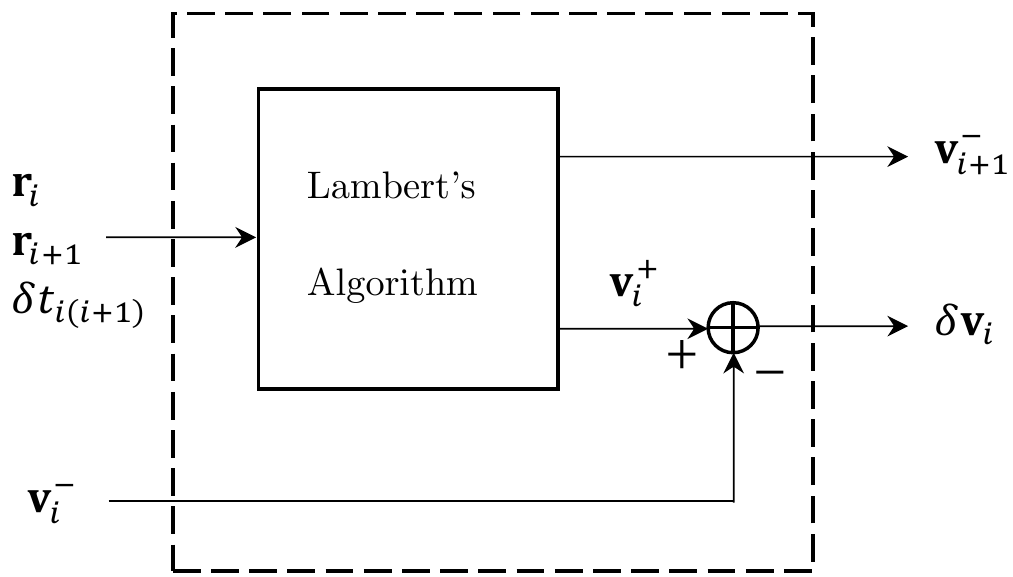}
\caption{Schematic view for Algorithm \ref{al:1}.}
\label{fig:001}
\end{figure}

\begin{algorithm}
\label{al:2}
\caption{An algorithm for impulse generation in MIOMs with $n$ impulses.}
\SetAlgoLined
\textbf{Input:} Number/index of impulses, $i=1,\cdots,n$; impulse positions $\textbf{r}_i$; impulse times, $\delta t_{i,i+1}$; initial velocity $\textbf{v}^-_1$; and final velocity, $\textbf{v}^+_n$.\\
\textbf{Output:} Impulse vectors $\delta \textbf{v}_i$, $i=1,\cdots,n$.\\
\For {$i=1,\cdots,n$}{
1. Run Algorithm \ref{al:1} with $\textbf{r}_i$, $\delta t_{i,i+1}$, and $\textbf{v}^-_i$ as inputs.\\
2. Save the first output of Algorithm \ref{al:1}, $\delta \textbf{v}_i$, and use the second output, $\textbf{v}_{i+1}^-$, as an input for the next iteration. \\
}
3. $\delta\textbf{v}_n\gets\textbf{v}_n^+-\textbf{v}_n^-$. \\
\textbf{Return:} $\delta \textbf{v}_i$, $i=1,\cdots,n$.
\end{algorithm}

Consider the following cost functions for a two impulse maneuver:
\begin{equation}
\label{eq:11p}
J_{CE}=\sum_{i=1}^n\delta v_i
\end{equation}
\begin{equation}
\label{eq:11pp}
J_{MI}=\max\{\delta v_1,\delta v_2,\cdots,\delta v_n\}
\end{equation}
in which the former defines the control effort, the latter is the maximum required impulse magnitude, and $\|\delta\textbf{v}_i\|=\delta v_i$. The above-mentioned cost functions are expressible as follows using Eqs. \eqref{eq:9}, \eqref{eq:10}, and \eqref{eq:11}. 

\begin{equation}
\label{eq:11ppp}
\begin{split}
&J_{CE}=\left\|\textbf{L}\left(\textbf{r}_1,\textbf{r}_2,\delta t_{1,2}\right)-\textbf{v}_1^-\right\|\\
&+\left\|\textbf{L}\left(\textbf{r}_2,\textbf{r}_{3},\delta t_{2,3}\right)-\textbf{f}_v\left(\textbf{r}_1,\textbf{L}\left(\textbf{r}_1,\textbf{r}_2,\delta t_{1,2}\right),\delta t_{1,2}\right)\right\|+\cdots
\end{split}
\end{equation}
\begin{equation}
\label{eq:11pppp}
\begin{split}
&J_{MI}=\max\left\{\left\|\textbf{L}\left(\textbf{r}_1,\textbf{r}_2,\delta t_{1,2}\right)-\textbf{v}_1^-\right\|,\right.\\
&\left.\left\|\textbf{L}\left(\textbf{r}_2,\textbf{r}_{3},\delta t_{2,3}\right)-\textbf{f}_v\left(\textbf{r}_1,\textbf{L}\left(\textbf{r}_1,\textbf{r}_2,\delta t_{1,2}\right),\delta t_{1,2}\right)\right\|,\cdots\right\}
\end{split}
\end{equation}

To simplify the analysis for near circular orbits, polar coordinates are used. In a polar coordinate system, an equatorial orbit can be specified by $r(t)$ and $\theta(t)$ such that $\textbf{r}(t)=r(t)[\cos\theta(t)\\\quad\sin\theta(t)]^T$. Before presenting a result, consider the following assumption:
\begin{assumption}
\label{ass:2}
The spacecraft trajectory, except of impulse instants, is approximately circular, i.e., $\dot{r}(t)\simeq0$ and $\ddot{\theta}(t)\simeq0$.
\end{assumption}

Under Assumption \ref{ass:2}, in a coplanar trajectory, $\textbf{v}_i^-,\textbf{r}_i$ and $\textbf{v}_{i+1}^+,\textbf{r}_{i+1}$ are functions of $\theta_i$ and $\theta_{i+1}$, respectively. Therefore, the cost functions of Eqs. \eqref{eq:11ppp} and \eqref{eq:11pppp} are both functions of $\theta_i$, $\theta_{i+1}$, and $\delta t_{i,i+1}$, i.e., $J_{CE}\equiv J_{CE}(\theta_1,\cdots,\theta_n,\delta t_{1,2},\cdots,\delta t_{n-1,n})$ and $J_{MI}\equiv J_{MI}(\theta_1,\cdots,\theta_n,\delta t_{1,2},\cdots,\delta t_{n-1,n})$. 

\begin{proposition}
\label{fact:1}
Suppose Assumption \ref{ass:1} is satisfied. The cost functions $J_{CE}$ and $J_{MI}$ both have one global minimum value with respect to $\delta t_{i,i+1}$ and if also Assumption \ref{ass:2} is satisfied, then, they have also one global minimum with respect to $\theta_i$ for all $i\in\mathbb{N}$ (See Fig. \ref{fig:0}).
\end{proposition}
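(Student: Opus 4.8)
The plan is to deduce the statement for $J_{CE}$ and $J_{MI}$ from a single-variable unimodality property of the individual impulse magnitudes $\delta v_i$, since $J_{CE}=\sum_i\delta v_i$ is a sum and $J_{MI}=\max_i\delta v_i$ a pointwise maximum of these terms. If each $\delta v_i$ is a strictly convex---hence strictly quasi-convex---function of the free variable with the remaining variables fixed, then $J_{CE}$ is strictly convex and $J_{MI}$ is quasi-convex, and in both cases the restricted cost has a single global minimizer. A fixed $\delta t_{i,i+1}$ enters only the two adjacent terms through Eqs.~\eqref{eq:10} and \eqref{eq:11}, and a fixed $\theta_i$ enters only the legs $(i-1,i)$ and $(i,i+1)$; since convexity is preserved under summation and under the affine reparametrization these variables induce, it suffices to analyze a single two-impulse leg. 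Assumption~\ref{ass:1} guarantees, via \citep{simo1973solucion}, that $\textbf{L}$ is single-valued and depends continuously on its arguments, so every $\delta v_i$ is continuous and the extremization is well posed.

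For the dependence on $\delta t_{i,i+1}$ I would fix the endpoints $\textbf{r}_i$ and $\textbf{r}_{i+1}$ and parametrize the Lambert transfer by its semi-major axis $a$ through the time-of-flight relation. On the short-way, single-revolution branch selected by Assumption~\ref{ass:1} the time of flight is strictly monotone in $a$, so $\delta t_{i,i+1}\leftrightarrow a$ is a smooth bijection. The terminal speeds are fixed by vis-viva, $v^2=\mu(2/r-1/a)$, and the terminal flight-path angles by conservation of angular momentum; as $\delta t_{i,i+1}\to 0$ one has $a\to 0$ and the required speed, hence $\delta v_i$, diverges, which fixes the behaviour at the lower end of the range. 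The decisive step is then to show that $\delta v_i=\|\textbf{v}_i^+-\textbf{v}_i^-\|$---the distance in the velocity (hodograph) plane between the fixed circular-orbit velocity $\textbf{v}_i^-$ and the Lambert departure velocity---is unimodal in $a$, with a single interior minimizer; the same argument applied to the arrival velocity governs $\delta v_{i+1}$.

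For the dependence on $\theta_i$ I would invoke Assumption~\ref{ass:2}: on a near-circular orbit $r_i$ is constant and $\textbf{v}_i^-$ is tangential with fixed magnitude $\sqrt{\mu/r_i}$, so $\theta_i$ affects the cost only through the transfer geometry. Since the two-body field is rotationally symmetric, $J$ is invariant under a common rotation of all $\theta_j$ and therefore depends on the $\theta_j$ only through the transfer angles $\Delta\theta_{j,j+1}=\theta_{j+1}-\theta_j$; varying $\theta_i$ alone shifts $\Delta\theta_{i-1,i}$ and $\Delta\theta_{i,i+1}$ affinely and oppositely, reducing the claim to the unimodality of the single-leg magnitude $\delta v(\Delta\theta)$. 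I would show $\delta v(\Delta\theta)$ is large near the admissible boundary---as $\Delta\theta\to 0$ the arc becomes nearly radial and cannot match the tangential circular velocities without a large impulse, while $\Delta\theta\to\pi$ is the limit of Assumption~\ref{ass:1}---and has a unique interior minimum, again by the hodograph-distance argument, so that the restricted $J_{CE}$ and $J_{MI}$ inherit a single global minimum.

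The main obstacle is the unimodality, ideally the convexity, of the norm-of-difference $\delta v=\|\textbf{v}^+-\textbf{v}^-\|$ as a function of $a$ or of $\Delta\theta$: both the magnitude and the direction of the Lambert terminal velocity vary with the parameter, so the composition is not convex merely because its ingredients are monotone. I expect to settle this by working entirely in the velocity plane and proving that the Lambert terminal-velocity locus is a smooth arc of bounded curvature, convex toward the fixed circular-orbit velocity, so that the point-to-curve distance is strictly quasi-convex with a single stationary point. The coupling of one $\delta t_{i,i+1}$ or $\theta_i$ into two adjacent impulse terms, and the joint dependence of the node-$i$ impulse on both adjacent transfer angles, are then secondary issues handled by preservation of convexity under affine substitution and summation.
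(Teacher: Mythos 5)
Your proposal has a genuine gap; in fact the reduction it rests on uses two claims that are false for this problem. First, there is no ``smooth bijection'' between $\delta t_{i,i+1}$ and the semi-major axis $a$: for fixed endpoints the single-revolution Lambert time of flight is \emph{not} monotone in $a$, and Assumption~\ref{ass:1} does not select a monotone branch. The function $a(\delta t)$ is U-shaped, with a unique interior minimum $a_{\min}$ at the minimum-energy transfer, and every $a>a_{\min}$ is attained by two conjugate elliptic arcs (a fast one and a slow one), both of which are short-way, single-revolution transfers; likewise, as $\delta t\to 0$ the solution leaves the elliptic regime altogether rather than tending to $a\to 0$ through ellipses. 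This unimodality of $a(\delta t)$ is precisely the starting point of the paper's own proof, which composes it with vis-viva (so $v$ is non-decreasing in $a$) and the quadratic dependence of $\delta v^2$ on $v$, counts the possible sign changes of $d(\delta v)/d(\delta t)$, and then uses the divergence of $\delta v$ as $\delta t\to 0$ and $\delta t\to\infty$ to conclude that there is a single global minimum. Second, your route to the sum $J_{CE}$ requires each $\delta v_i$ to be strictly convex in the free variable, and that is unavailable: the paper explicitly observes (with Fig.~\ref{fig:0} as witness) that $\delta v$ is not convex in $\delta t$. Quasi-convexity survives pointwise maxima, so your $J_{MI}$ step would be fine, but it is not preserved under sums; once convexity is gone, the reduction of $J_{CE}$ to a single two-impulse leg is unjustified, and this coupling is exactly the delicate point rather than a ``secondary issue handled by preservation of convexity under summation.''

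Beyond these two errors, the step you yourself flag as the main obstacle---unimodality of the hodograph distance $\|\textbf{v}^+-\textbf{v}^-\|$ along the Lambert terminal-velocity locus---is deferred, not proved, so even the single-leg unimodality is missing its core. For the $\theta_i$ statement the paper avoids this geometry entirely: under Assumption~\ref{ass:2} it argues by contradiction, using Eq.~(63) in \citep{de2018solution}, namely $\delta t^2=\alpha_2\cos^2(\theta/2)+\alpha_1\cos(\theta/2)+\alpha_0$, which is quadratic in $\cos(\theta/2)$; hence a given level of $\delta t$ (equivalently of $a$, $v$, or the cost) can correspond to at most two values of $\theta$, so the derivative of the cost with respect to $\theta$ cannot change sign three or more times, and a single global minimum follows. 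To salvage your plan you would need to replace convexity by a direct proof of unimodality of each $\delta v_i$ (as the paper does via the composition argument) and then supply a separate argument for why the coupled sum $J_{CE}$ remains unimodal.
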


\begin{proof}
For simplicity we drop the subscripts. The semimajor axis of the transfer trajectory, $a$, against the time of flight, $\delta t$, has a single global minimum value (which corresponds to the so-called minimum-energy transfer \citep{izzo2015revisiting}). According to the orbital energy equation, $v^2/2=\mu(r-1/a)$ ($v$ can stand for the initial or final velocity of the Lambert's trajectory), $v$ is a non-decreasing function of $a$, therefore $v$ has a single global minimum with respect to $\delta t$. The square of the (first or second) impulse magnitude, $\delta v^2$, has a quadratic relation with $v$. Therefore, since $\delta v^2$ is a composite of a quadratic function and an invex function of $\delta t$, it has none, a single, or two extremums with respect to $\delta t$ (because $d\delta v/d\delta t=d\delta v/da\cdot da/d\delta t$, hence, $d\delta v/d\delta t$ can switch the sign one, two, or three times). We know that both when $t\rightarrow0$ and $t\rightarrow\infty$ the value of $\delta v^2$ approaches infinity. Therefore, $\delta v$ (as well as their weighted sum) has a single global minimum against $\delta t$ (which according to Fig. \ref{fig:0} it is not convex since obviously counter examples exist). 

For the second part we use a proof by contradiction. Suppose that the cost function with respect to $\theta$, for a constant $\delta t$, changes the derivative sign of the cost function (and equivalently the velocity magnitudes) three (or more) times. Therefore, corresponding to some values of $\delta t$, $a$ (or $v$), and $J$, there exist three (or more) solution values of $\theta$. According to Eq. (63) in \citep{de2018solution} one can obtain:
\begin{equation}
\label{eq:prop_1}
\delta t^2=\alpha_2\cos^2\left(\frac{\theta}{2}\right)+\alpha_1\cos\left(\frac{\theta}{2}\right)+\alpha_0
\end{equation}
for some fixed values of $\alpha_{0,1,2}$. According to Eq. \eqref{eq:prop_1} it is impossible for $\theta$ to pick more than two solutions. Therefore, the first assumption is false and the cost function with respect to $\theta$, for a constant $\delta t$, has none, one, or two solutions which means that there exists one global minimum. 
\end{proof}

\begin{figure*}[h]
\centering\includegraphics[width=1\linewidth]{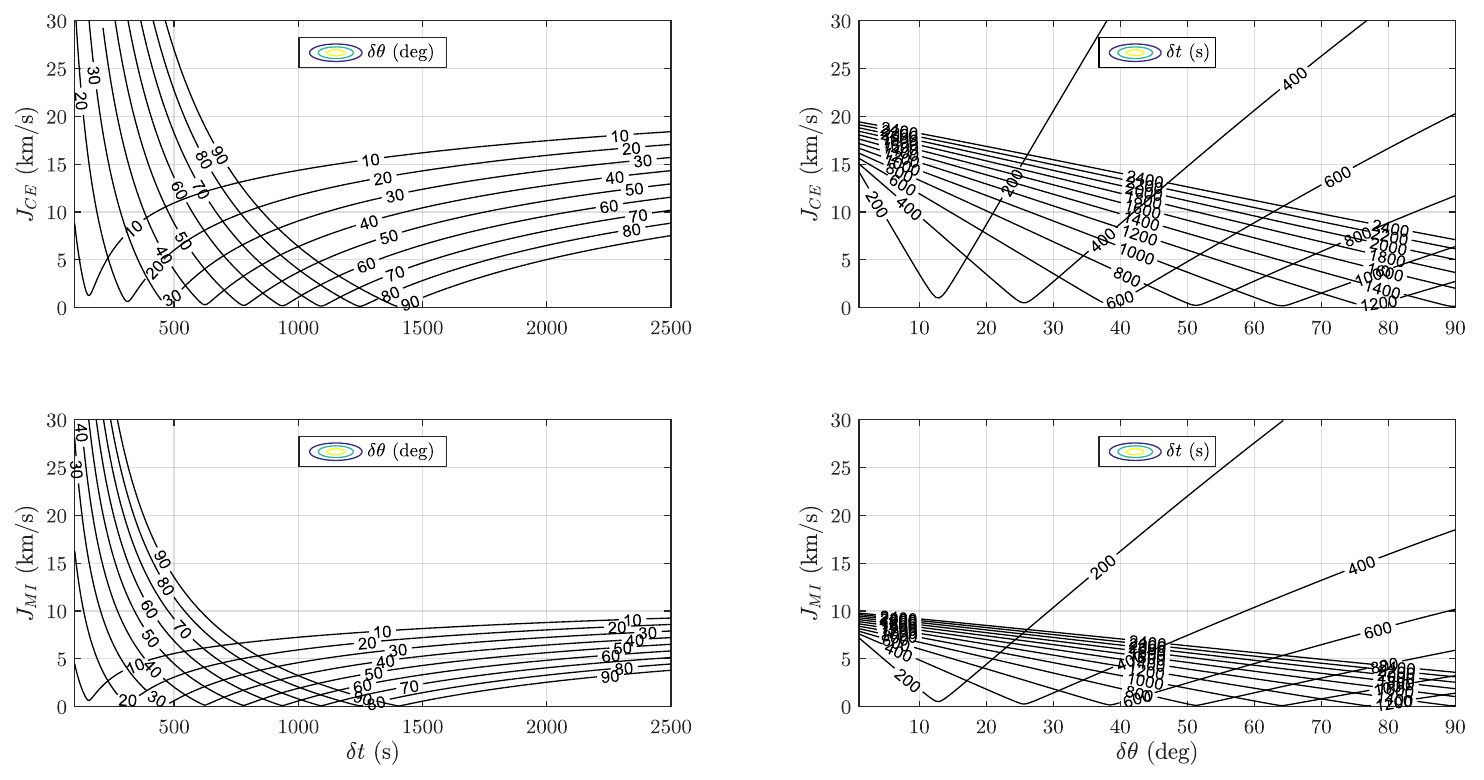}
\caption{Graphical representation of $J_{EC}$ and $J_{MI}$ as functions of $\delta t_{1,2}$ and $\delta\theta=\theta_2-\theta_1$ for a two-impulse maneuver between circular orbits with altitudes of 400 km and 500 km.}
\label{fig:0}
\end{figure*}

\section{Covariance Propagation and Analysis}
\label{S:3}

For a circular orbit, the dynamics in a polar coordinate system is expressible as follows: 

\begin{equation}
\label{eq:12}
\left[\begin{matrix}
\dot{r}(t) \\
\dot{\theta}(t) \\
\dot{\omega}(t)\end{matrix}\right]=\left[\begin{matrix}
0 & 0 & 0 \\
0 & 0 & 1 \\
0 & 0 & 0
\end{matrix}\right]\left[\begin{matrix}
r(t) \\
\theta(t) \\
\omega(t)\end{matrix}\right]+\textbf{w}
\end{equation}
where $\textbf{w}\in\mathbb{R}^3$ is a zero-mean, normally-distributed, random vector with the associated diagonal covariance matrix of $\textbf{Q}=[Q_{ii}]\in\mathbb{R}^{3\times3}$. Suppose the measurement vector is $\textbf{m}=[r\quad\theta]^T+\textbf{v}$ in which $\textbf{v}$ is a zero-mean, normally-distributed random vector with the associated diagonal covariance matrix of $\textbf{R}=[R_{ii}]\in\mathbb{R}^{2\times2}$.

The propagation of the state covariance matrix, $\textbf{P}(t)=[P_{ij}(t)]$, can be stated as

\begin{equation}
\label{eq:13}
\dot{\textbf{P}}=\left[\begin{matrix}
0 & 0 & 0 \\
0 & 0 & P_{22} \\
0 & P_{22} & 2P_{23}
\end{matrix}\right]-\left[\begin{matrix}
P_{11}^2/R_{11} & 0 & 0 \\
0 & P_{22}^2/R_{22} & P_{22}P_{23}/R_{22} \\
0 & P_{22}P_{23}/R_{22} & P_{33}^2/R_{22}
\end{matrix}\right]+\textbf{Q}
\end{equation}

According to the above formulation, $P_{11}$ and $P_{22}$ are decoupled. Thus, for $i=1$ or $2$:

\begin{equation}
\label{eq:14}
P_{ii}(t)=\frac{R_{ii}P_{ii}(0)+\sqrt{R_{ii}Q_{ii}}(P_{ii}(0)-\sqrt{R_{ii}Q_{ii}})t}{R_{ii}+(P_{ii}(0)-\sqrt{R_{ii}Q_{ii}})t}
\end{equation}

In Fig. \ref{fig:1}, the phase plane of Eq. \eqref{eq:12} is plotted schematically which shows that if $P_{ii}(0)>\sqrt{R_{ii}Q_{ii}}$, that is often the case, the uncertainty of $r(t)$ and $\theta(t)$ will decrease over time. If $Q_{ii}=0$, then the limit of uncertainty is zero. 

\begin{figure}[h]
\centering\includegraphics[width=0.59\linewidth]{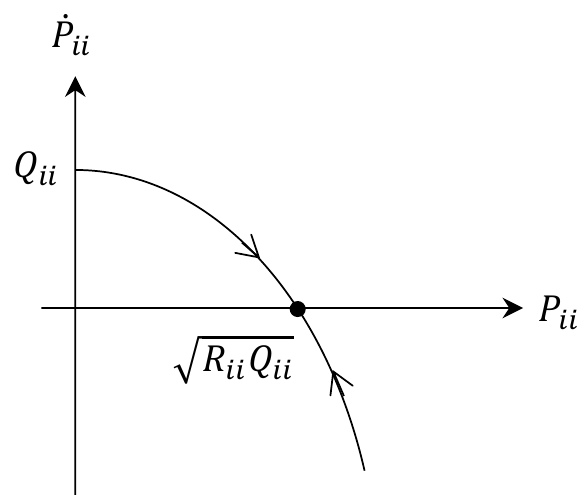}
\caption{Phase plane of $\dot{P}_{ii}=-P_{ii}^2/R_{ii}+Q_{ii}$.}
\label{fig:1}
\end{figure}

Considering $\theta(t)=\omega t=\sqrt{\mu/r^3} t$ for circular orbits, Eq. \eqref{eq:14} can be expressed as a function of $\theta$:

\begin{equation}
\label{eq:15}
P_{ii}(\theta)=\frac{\sqrt{\mu}R_{ii}P_{ii}(0)+r^{3/2}\sqrt{R_{ii}Q_{ii}}(P_{ii}(0)-\sqrt{R_{ii}Q_{ii}})\theta}{\sqrt{\mu}R_{ii}+r^{3/2}(P_{ii}(0)-\sqrt{R_{ii}Q_{ii}})\theta}
\end{equation}

According to Eqs. \eqref{eq:14} and \eqref{eq:15}, the variances of $r(t)$ and $\theta(t)$ decrease in the window that observation exists, but in the rest of the path, the variances increase as $P_{ii}=Q_{ii}t=\sqrt{r^3/\mu}Q_{ii}\theta$. Depending on the measurement accuracy, spacecraft orbit, and the process uncertainty, waiting for the next window may increase or decrease the orbit accuracy. However, the variance decrement in the observation window still exists. Therefore, another cost function on behalf of the variances can be defined as follows:

\begin{equation}
\label{eq:16}
J_V=\sum_{i=1}^{n-1}|\theta_i-\theta_i^{\mathrm{max}}|
\end{equation}

Another source of uncertainty is the impulse vectors. A lower amount of $J_{CE}$ results in a more accurate final trajectory. In this regard, inserting $J_{CE}$ to the total cost function not only results in a decrease of control effort, but as a side effect, decreases the uncertainty due to the impulse magnitudes. 

\section{Optimization and Simulations}
\label{S:4}

According to the results of the previous sections, the following total cost function can be defined which under Assumptions \ref{ass:1} and \ref{ass:2} is (quasi) convex with respect to the optimization variables of $\theta_i$, $i=\{1,2,\cdots,n\}$ and $\delta t_{i,i+1}$, $i=\{1,2,\cdots,n-1\}$. 

\begin{equation}
\label{eq:17}
\begin{split}
&\underset{\theta_i,\delta t_{i,i+1}}{\mathrm{minimize}}\quad w_{CE}J_{CE}+w_{MI}J_{MI}+w_VJ_V\\
&\quad\quad\quad\quad\quad\theta_i^{\mathrm{min}}\leq\theta_i\leq\theta_i^{\mathrm{max}}
\end{split}
\end{equation}

The above problem can be easily solved under convex constraints on $\delta t_{i,i+1}$ and $\theta_i$. The minimum and maximum values of $\theta_i$ are constrained by $\theta_i^{\mathrm{min}}$ and $\theta_i^{\mathrm{max}}$ that are determined by the observation field provided by a ground based observation site for example. Since a real MIOM problem has degrees of freedom also on the values of $\|\textbf{r}_i\|$, therefore, the following problem is solved in this paper instead of the ideal problem defined by Eq. \eqref{eq:17}. The following problem may has more than one local minimum solutions, but according to Proposition \ref{fact:1}, the local minimum is near the global minimum if the initial and final orbits are close enough and consequently the transfer trajectory is near circular (i.e., Assumption \ref{ass:2} is satisfied approximately).
\begin{equation}
\label{eq:17_p}
\begin{split}
&\underset{\theta_i,\delta t_{i,i+1},\|\textbf{r}_i\|}{\mathrm{minimize}}\quad w_{CE}J_{CE}+w_{MI}J_{MI}+w_VJ_V\\
&\quad\quad\quad\quad\quad\theta_i^{\mathrm{min}}\leq\theta_i\leq\theta_i^{\mathrm{max}}
\end{split}
\end{equation}

\begin{remark}
\label{rem:1}
The cost function which is considered in this paper is free of exact covariance elements, unlike works done by \cite{in20,in21,in22}, in which a representative cost function, $J_V$, is used instead. This approach makes the solution much more easier and faster to obtain, while needs the designer to have intuitions about the level of the uncertainties in order to select appropriate values for the weighting parameter, $w_V$.
\end{remark}

\begin{remark}
\label{rem:2}
An optimization problem pretty similar to problem \eqref{eq:17_p} may be written in the following form:
\begin{equation}
\label{eq:17_pp}
\begin{split}
&\underset{\theta_i,\delta t_{i,i+1},\|\textbf{r}_i\|}{\mathrm{minimize}}\quad w_{CE}J_{CE}+w_{MI}J_{MI}\\
&\quad\quad\quad\quad\quad\theta_i^{\mathrm{min}}+\theta_i^{\mathrm{lower}}\leq\theta_i\leq\theta_i^{\mathrm{max}}
\end{split}
\end{equation}
such that after $\theta_i=\theta_i^{\mathrm{min}}+\theta_i^{\mathrm{lower}}$ it is known that a convergence occurs in the filtering procedure. However, since such an information is not known for a system (i.e., the covariance matrix elements cannot be calculated offline) we solve the previously mentioned problem \eqref{eq:17_p} in this study. 
\end{remark}

In this paper, a gradient-based optimization method is used at which the gradients are evaluated numerically by a finite-difference technique. The implemented optimization method is summarized in Algorithm \ref{al:3}. 

\begin{algorithm}
\label{al:3}
\caption{A gradient-based numerical optimization algorithm to solve problem \eqref{eq:17_p}.}
\SetAlgoLined
\textbf{Input:} A desired number of impulses, $i=1,\cdots,n$; initial guesses for impulse position magnitudes, $\|\textbf{r}_i^{(1)}\|$ ($i=2,\cdots,n-1$); initial guesses for impulse angles, $\theta_i^{(1)}$ ($i=1,\cdots,n-1$); initial guesses for impulse times, $\delta t_{i,i+1}^{(1)}$ ($i=1,\cdots,n-1$); initial orbit, $\|\textbf{r}_1\|$ and $\|\textbf{v}^-_1\|$ as well as the final orbit, $\textbf{r}_n$ and $\textbf{v}^+_n$; the observation filed of view, $\alpha$; and the weighting values, $w_{CE}$, $w_{MI}$, and $w_{V}$.\\
\textbf{Output:} The optimum values for optimization variables, $\|\textbf{r}_i^{*}\|$, $\theta_i^{*}$, and $\delta t_{i,i+1}^{*}$.\\
1. $\textbf{x}^{(1)}\gets[\|\textbf{r}_2^{(1)}\|,\cdots,\|\textbf{r}_{n-1}^{(1)}\|,\theta_1^{(1)},\cdots,\theta_{(n-1)}^{(1)},\delta t_{1,2}^{(1)},\cdots,\delta t_{n-1,n}^{(1)}]^T$ \\
\For {$i=1,\cdots,N$}{
2. $\theta_n^{(i)}\gets\theta^{\mathrm{max}}_n$\\
3. Pick an appropriate value for $\gamma>0$. \\
\For {$j=1,\cdots,4n-7$}{
4. Run Algorithm \ref{al:2} with $\textbf{x}^{(i)}$, $\textbf{r}_1$, $\textbf{v}^-_1$, $\textbf{r}_n$, and $\textbf{v}^+_n$. \\
5. $J\gets w_{CE}J_{CE}+w_{MI}J_{MI}+w_VJ_V$ (from the outputs of step 4) \\
6. Pick a small enough $\varepsilon>0$.\\
7. Add $\varepsilon$ to the $j$th element of $\textbf{x}^{(i)}$. \\
8. Repeat step 4. \\
9. $J_{\delta}\gets w_{CE}J_{CE}+w_{MI}J_{MI}+w_VJ_V$ (from the outputs of step 8) \\
10. $(\nabla_{\textbf{x}} J)_j\gets J_{\delta}-J$
}
11. $(\nabla_{\textbf{x}} J)_{\textbf{x}=\textbf{x}^{(i)}}\gets[(\nabla_{\textbf{x}} J)_1,\cdots,(\nabla_{\textbf{x}} J)_{4n-7}]^T$ \\
12. $\textbf{x}^{(i+1)}\gets\textbf{x}^{(i)}-\gamma(\nabla_{\textbf{x}} J)_{\textbf{x}=\textbf{x}^{(i)}}$ \\
13. Extract the output values from $\textbf{x}^{(i)}=[\|\textbf{r}_2^{(i)}\|,\cdots,\|\textbf{r}_{n-1}^{(i)}\|,\theta_1^{(i)},\cdots,\theta_{(n-1)}^{(i)},\delta t_{1,2}^{(i)},\cdots,\delta t_{n-1,n}^{(i)}]^T$. \\
\For {$i=1$ to $n-1$}{
14. Obtain the values of $\theta_i^{\mathrm{max}}$ and $\theta_i^{\mathrm{min}}$ (e.g. using Eq. \eqref{eq:18}).\\
\If {$\theta_i>\theta_i^{\mathrm{max}}$}{
15. $\theta_i\gets\theta_i^{\mathrm{max}}$
}
\If {$\theta_i<\theta_i^{\mathrm{min}}$}{
16. $\theta_i\gets\theta_i^{\mathrm{min}}$
}
}
}
17. Extract the output values from $\textbf{x}^{(N)}\equiv\textbf{x}^{*}=[\|\textbf{r}_2^{*}\|,\cdots,\|\textbf{r}_{n-1}^{*}\|,\theta_1^{*},\cdots,\theta_{(n-1)}^{*},\delta t_{1,2}^{*},\cdots,\delta t_{n-1,n}^{*}]^T$. \\
\textbf{Return:} $\|\textbf{r}_i^{*}\|$ ($i=2,\cdots,n-1$), $\theta_i^*$ ($i=1,\cdots,n-1$), $\theta_n^*=\theta_n^{\mathrm{max}}$, and $\delta t_{i,i+1}^{*}$ ($i=1,\cdots,n-1$).
\end{algorithm}

If the observation field of view half angle is $\alpha$, then we have:

\begin{equation}
\label{eq:18}
\theta_i^{\mathrm{max}}=\cos^{-1}\left(\frac{R}{R+r_i}\sin^2\alpha+\cos\alpha\sqrt{1-\left(\frac{R}{R+r_i}\right)^2\sin^2\alpha}\right)
\end{equation}
where $R$ is the celestial body radius and $\theta_i^{\mathrm{min}}$ is considered equal to $-\theta_i^{\mathrm{max}}$ in our case studies. 

\begin{remark}
\label{rem:3}
It is obvious that the set of all $n$-impulse maneuvers is a subset of the set of all $m$-impulse maneuvers if $m>n$ (which is equivalent to the set of $m$-impulse maneuvers with $m-n$ zero impulses). Consequently, we have $J^*_m\leq J^*_n$ where $J^*_m$ and $J^*_n$ are the solutions of problem \eqref{eq:17_p} corresponding to $m$ and $n$ impulses, respectively. Therefore, increasing the number of impulses may decrease the optimal cost function or at least leaves it unchanged. However, the computational effort will increase considerably which is a result of the curse of dimensionality. 
\end{remark}

The four impulse orbital maneuver is considered as a numerical example for a transfer between circular orbits with altitudes of $500$ km and $1000$ km with $\alpha=60^{\circ}$. The unconstrained optimal maneuver for this case in view of control effort is the Hohmann maneuver that requires a maximum impulse capability of $0.33$ km/s and the sum of impulses will be as low as $0.62$ km/s. 

Fig. \ref{fig:2} shows the optimal trajectory for $w_{CE}=1,w_{MI}=w_{V}=0$ at which the sum of impulses is $1.48$ km/s and requires a maximum impulse magnitude of $0.78$ km/s. This solution has the minimum $J_{CE}$ such that the impulse position angles are constrained to be located in the observation window. However, this trajectory is not appropriate in view of uncertainty since $J_{V}$ is not considered, and consequently, the  first impulse is applied at the beginning of the observation period when the filtering procedure have not had enough time to converge. The sum of impulses is much higher than the Hohmann solution which is the result of the constrained impulse positions. 

\begin{figure}[h]
\centering\includegraphics[width=1\linewidth]{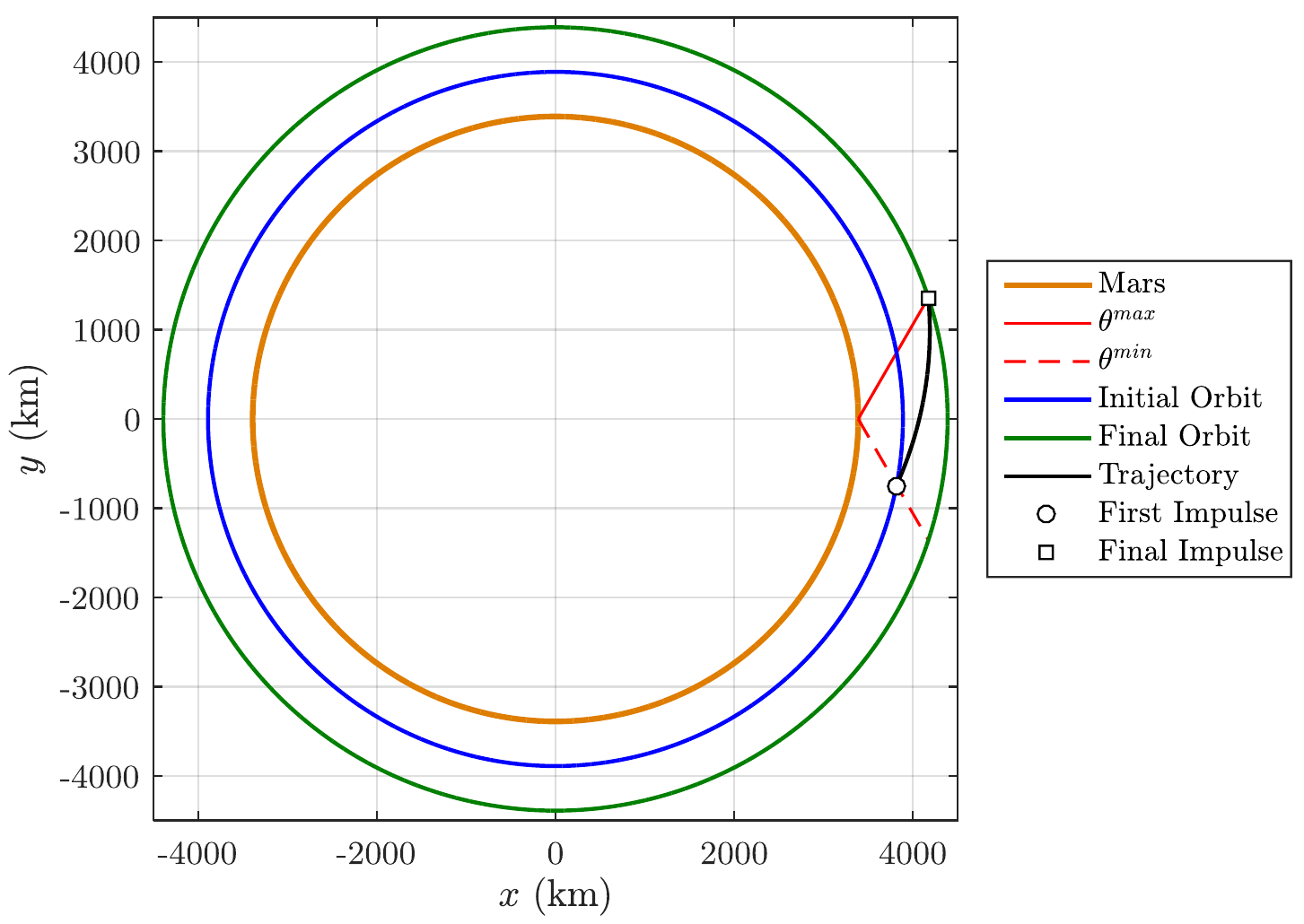}
\caption{Maneuver trajectory generated with $w_{CE}=1,w_{MI}=w_{V}=0$. The sum of impulses is $1.48$ km/s and the maximum impulse required is $0.78$ km/s.}
\label{fig:2}
\end{figure}

Fig. \ref{fig:3} shows the trajectory when the uncertainty is taken into account in which the first impulse position is postponed in order to provide more time for the estimation algorithm to converge. In this case, the sum of impulses is $2.37$ km/s and the maximum required impulse magnitude is $1.26$ km/s. Fig. \ref{fig:4} shows the case where the uncertainty has a major impact on the selection of impulse positions which is considered by increasing its cost function weight, $w_V$. In this case, the sum of impulses increased as high as $5.12$ km/s with a required maximum impulse of $2.68$ km/s. As is shown, when $w_{MI}$ is set to zero, the four-impulse trajectory reduces to a two-impulse trajectory ($\delta v_2=\delta v_3=0$). By increasing the value of $w_{MI}$ the maximum impulse required for the maneuver can be reduced where accordingly, Fig. \ref{fig:5} shows the optimal trajectory when the control effort and the maximum impulse are both important while no attentions are paid to the role of uncertainty. In this case, the sum of impulses is $1.91$ km/s while the maximum required impulse magnitude reaches a lower value of $0.5$ km/s. These amounts clearly show that how the uncertainty can be reduced by spending more energy. 

\begin{figure}[h]
\centering\includegraphics[width=0.7\linewidth]{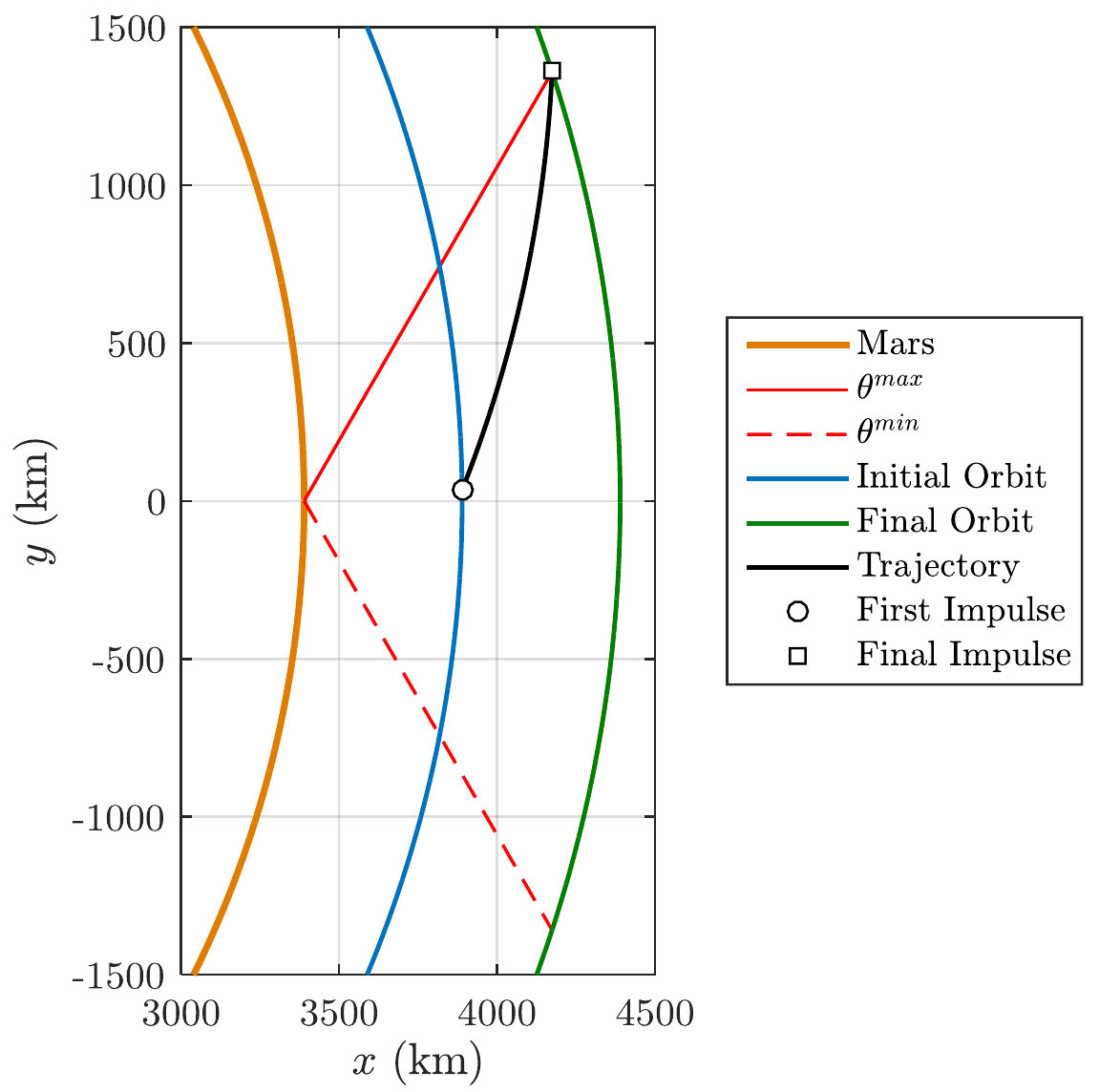}
\caption{Maneuver trajectory generated with $w_{CE}=1,w_{MI}=0,w_{V}=2$. The sum of impulses is $2.37$ km/s and the maximum impulse required is $1.26$ km/s.}
\label{fig:3}
\end{figure}

\begin{figure}[h]
\centering\includegraphics[width=0.7\linewidth]{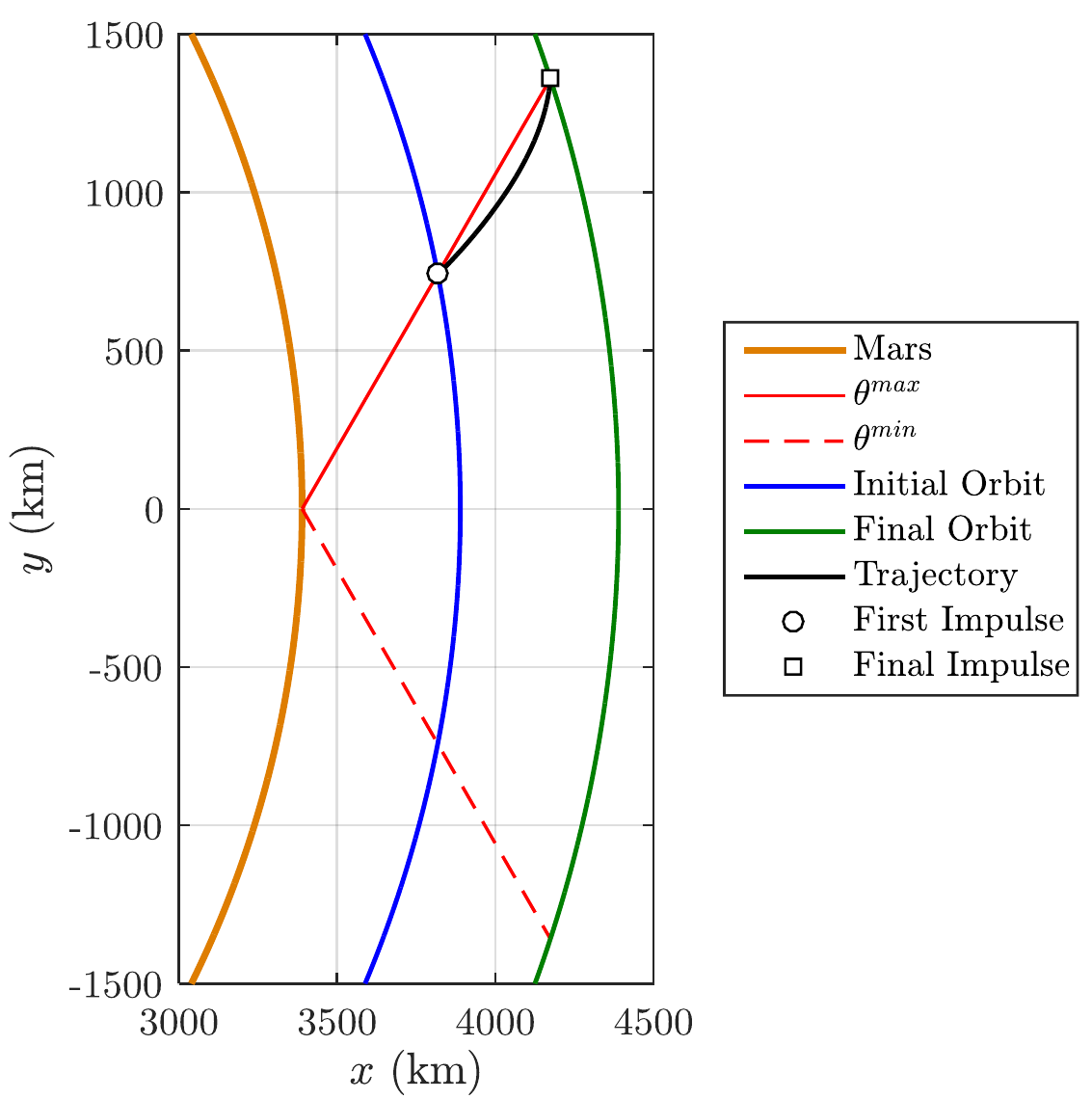}
\caption{Maneuver trajectory generated with $w_{CE}=1,w_{MI}=0,w_{V}=10$. The sum of impulses is $5.12$ km/s and the maximum impulse required is $2.68$ km/s.}
\label{fig:4}
\end{figure}

\begin{figure}[h]
\centering\includegraphics[width=0.7\linewidth]{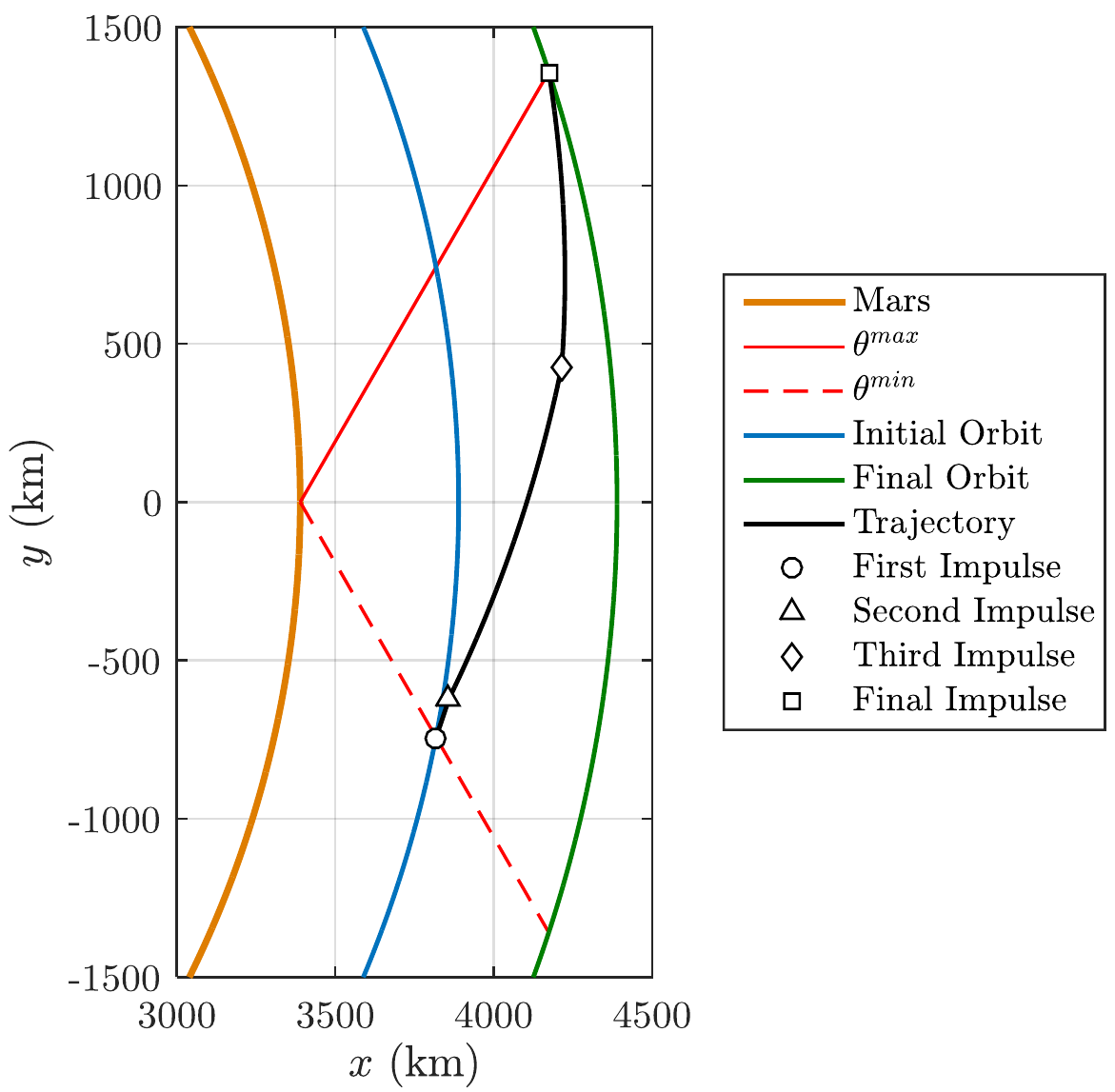}
\caption{Maneuver trajectory generated with $w_{CE}=1,w_{MI}=5,w_{V}=0$. The sum of impulses is $1.91$ km/s and the maximum impulse required is $0.50$ km/s.}
\label{fig:5}
\end{figure}

The general solutions, when all weighting values are non-zero, are shown in Figs. \ref{fig:6} and \ref{fig:7}. As is shown in Fig. \ref{fig:6}, selection of the weights may have other side effects, as collisions with the planet, which should be studied and devised by the designer. In the case study shown in Fig. \ref{fig:7}, the sum of impulses is $2.8$ km/s and the required maximum impulse magnitude is $0.71$ km/s. 

\begin{figure}[h]
\centering\includegraphics[width=1\linewidth]{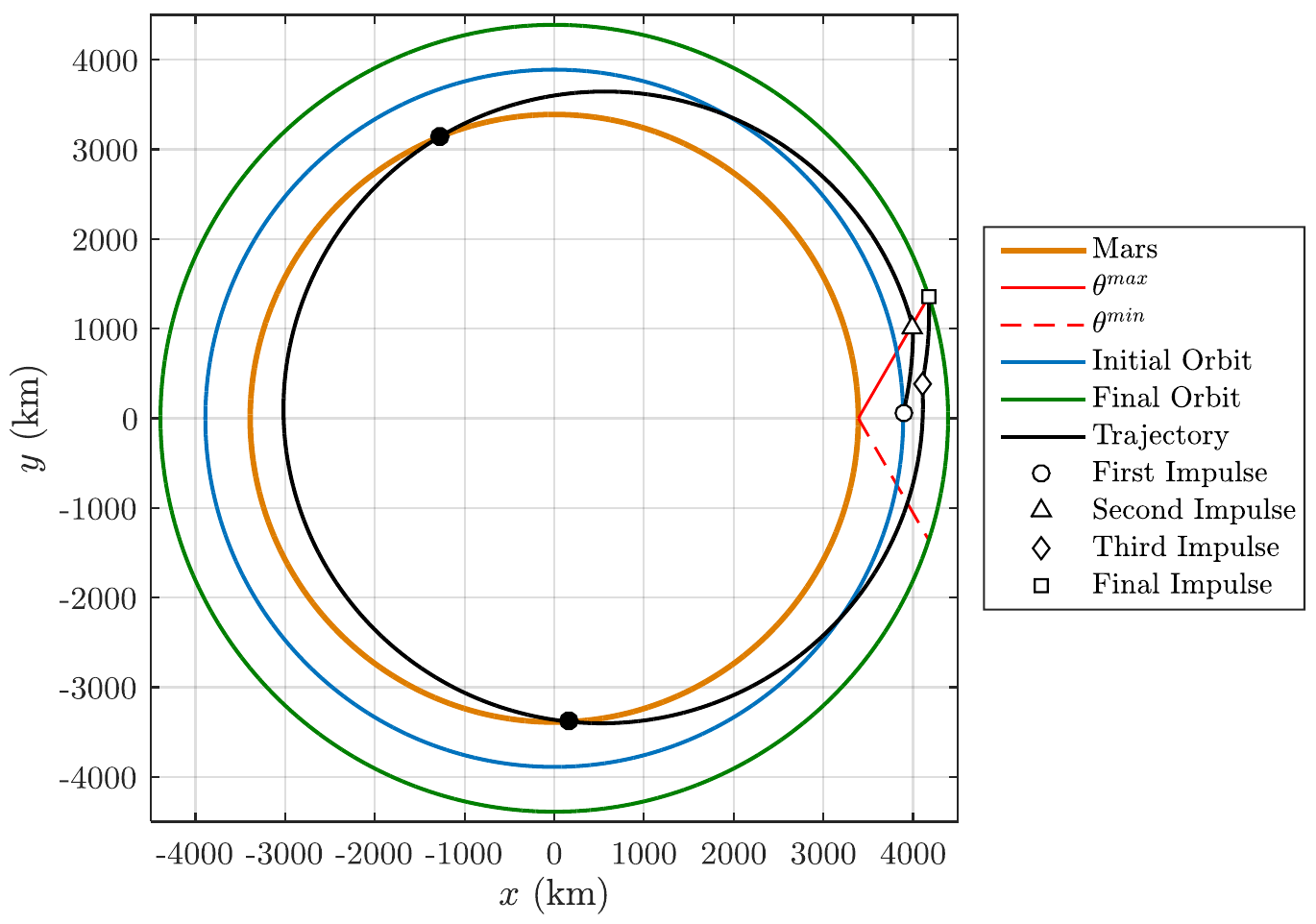}
\caption{Maneuver trajectory generated with $w_{CE}=1,w_{MI}=5,w_{V}=10$. The sum of impulses is $3.22$ km/s and the maximum impulse required is $0.83$ km/s. Collision points are shown by solid dots.}
\label{fig:6}
\end{figure}

\begin{figure}[h]
\centering\includegraphics[width=1\linewidth]{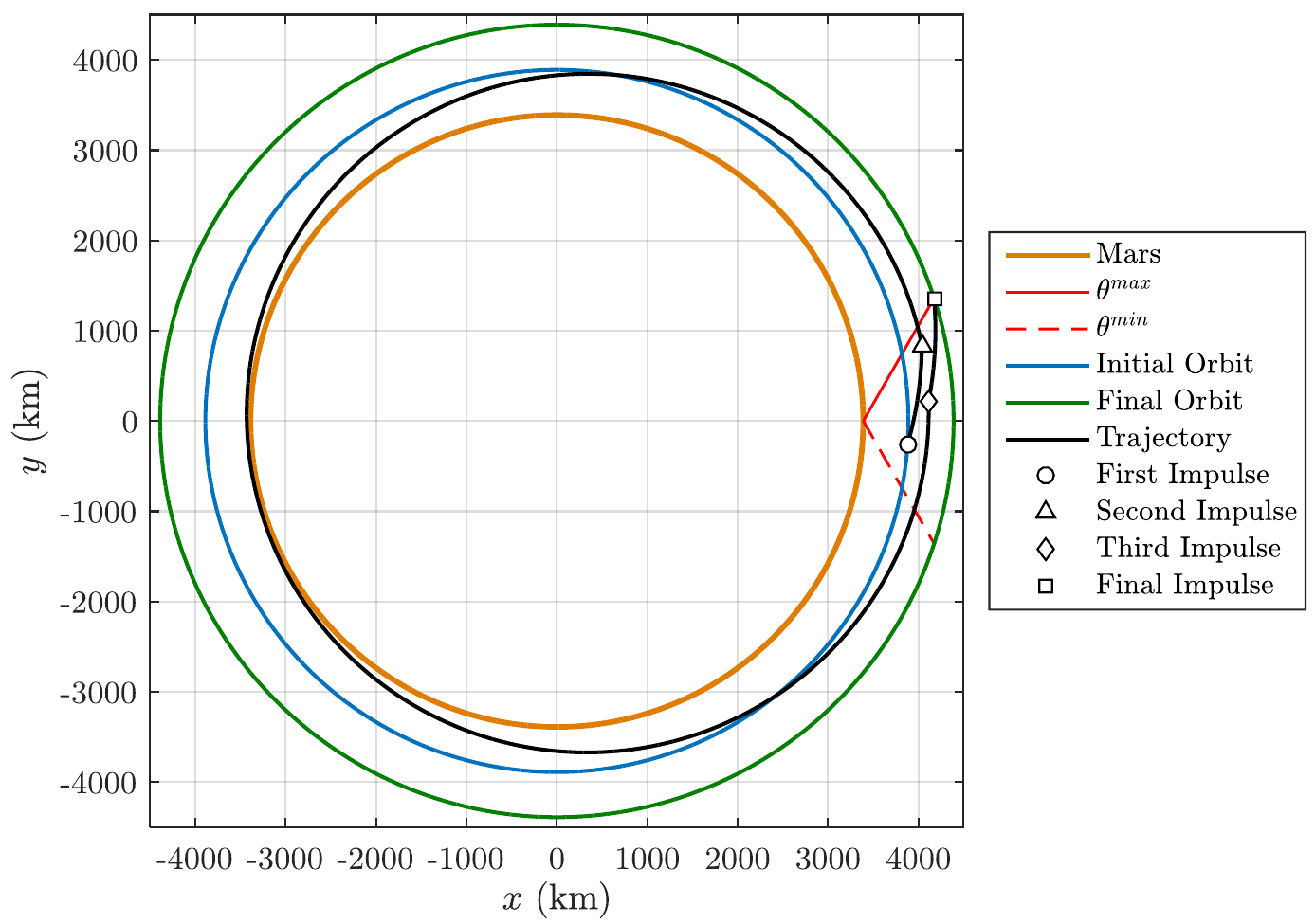}
\caption{Maneuver trajectory generated with $w_{CE}=1,w_{MI}=5,w_{V}=5$. The sum of impulses is $2.80$ km/s and the maximum impulse required is $0.71$ km/s.}
\label{fig:7}
\end{figure}

In Figs. \ref{fig:2}--\ref{fig:5}, Assumption \ref{ass:1} is satisfied. In case studies shown in Figs. \ref{fig:6} and \ref{fig:7}, Assumption \ref{ass:1} is violated and the trajectory turns around the celestial body. In the former cases the value of $\theta_2$ is less than $\theta_3$ while in the latter cases we have $\theta_2>\theta_3$. Regardless of the initial guesses used in Algorithm \ref{al:3}, the solutions are converged to the appropriate values which means that the proposed algorithms are robust to initial values without encountering any singularities. 

\section{Conclusions}
\label{S:5}

A multiple-impulse orbital maneuver (MIOM) scheme for preliminary trajectory optimization and mission design (MD) is proposed. The aforementioned problem is investigated while considering a limited observation window as well as the role of uncertainty involved in a realistic mission. The problem is formulated in a simple form so that a gradient-based optimization method can be implementable. The proposed MIOM approach is vital when the system lacks a global observation. Impulse positions and times have been considered as the optimization (design) variables in an actuated spacecraft dynamics model in which the Lambert's algorithm is incorporated for orbital maneuvers between arbitrary orbits in the three-dimensional space. A numerical case study is performed for MIOM under Mars gravitational field. The results showed how a trade-off can happen between the impulse time deferment (as a measure of uncertainty level), the control effort, and the maximum required impulse magnitude that should be considered in the MD. 

The future works may include a more realistic situation around a planet with different sources of observation which are provided from multiple space-based and/or ground-based stations. Solving the problem in a real operation field may needs more advanced and combined optimization techniques. Moreover, the same problem can rise in an asteroid environment with a highly perturbed gravity where a lander needs to be observed by a parent spacecraft. 

%\section*{References}

\bibliography{mybibfile}

\begin{thebibliography}{28}
\expandafter\ifx\csname natexlab\endcsname\relax\def\natexlab#1{#1}\fi
\providecommand{\url}[1]{\texttt{#1}}
\providecommand{\href}[2]{#2}
\providecommand{\path}[1]{#1}
\providecommand{\DOIprefix}{doi:}
\providecommand{\ArXivprefix}{arXiv:}
\providecommand{\URLprefix}{URL: }
\providecommand{\Pubmedprefix}{pmid:}
\providecommand{\doi}[1]{\href{http://dx.doi.org/#1}{\path{#1}}}
\providecommand{\Pubmed}[1]{\href{pmid:#1}{\path{#1}}}
\providecommand{\bibinfo}[2]{#2}
\ifx\xfnm\relax \def\xfnm[#1]{\unskip,\space#1}\fi
%Type = Article
\bibitem[{Abdelkhalik and Mortari(2007)}]{in19}
\bibinfo{author}{Abdelkhalik, O.}, \bibinfo{author}{Mortari, D.},
  \bibinfo{year}{2007}.
\newblock \bibinfo{title}{N-impulse orbit transfer using genetic algorithms}.
\newblock \bibinfo{journal}{Journal of Spacecraft and Rockets}
  \bibinfo{volume}{44}, \bibinfo{pages}{456--460}.
\newblock \DOIprefix\doi{10.2514/1.24701}.
%Type = Article
\bibitem[{Albouy(2019)}]{albouy2019lambert}
\bibinfo{author}{Albouy, A.}, \bibinfo{year}{2019}.
\newblock \bibinfo{title}{{L}ambert's theorem: Geometry or dynamics?}
\newblock \bibinfo{journal}{Celestial Mechanics and Dynamical Astronomy}
  \bibinfo{volume}{131}, \bibinfo{pages}{40}.
\newblock \DOIprefix\doi{10.1007/s10569-019-9916-2}.
%Type = Article
\bibitem[{Caruso et~al.(2019)Caruso, Quarta and Mengali}]{in8}
\bibinfo{author}{Caruso, A.}, \bibinfo{author}{Quarta, A.A.},
  \bibinfo{author}{Mengali, G.}, \bibinfo{year}{2019}.
\newblock \bibinfo{title}{Optimal transfer between elliptic orbits with three
  tangential impulses}.
\newblock \bibinfo{journal}{Advances in Space Research} \bibinfo{volume}{64},
  \bibinfo{pages}{861--873}.
\newblock \DOIprefix\doi{10.1016/j.asr.2019.05.037}.
%Type = Article
\bibitem[{De~La~Torre et~al.(2018)De~La~Torre, Flores and
  Fantino}]{de2018solution}
\bibinfo{author}{De~La~Torre, D.}, \bibinfo{author}{Flores, R.},
  \bibinfo{author}{Fantino, E.}, \bibinfo{year}{2018}.
\newblock \bibinfo{title}{On the solution of {L}ambert's problem by
  regularization}.
\newblock \bibinfo{journal}{Acta Astronautica} \bibinfo{volume}{153},
  \bibinfo{pages}{26--38}.
\newblock \DOIprefix\doi{10.1016/j.actaastro.2018.10.010}.
%Type = Article
\bibitem[{Eckel(1982)}]{in7}
\bibinfo{author}{Eckel, K.G.}, \bibinfo{year}{1982}.
\newblock \bibinfo{title}{Optimal impulsive transfer with time constraint}.
\newblock \bibinfo{journal}{Acta Astronautica} \bibinfo{volume}{9},
  \bibinfo{pages}{139--146}.
\newblock \DOIprefix\doi{10.1016/0094-5765(82)90081-9}.
%Type = Article
\bibitem[{Engels and Junkins(1981)}]{in17}
\bibinfo{author}{Engels, R.}, \bibinfo{author}{Junkins, J.},
  \bibinfo{year}{1981}.
\newblock \bibinfo{title}{The gravity-perturbed {L}ambert problem: A {KS}
  variation of parameters approach}.
\newblock \bibinfo{journal}{Celestial Mechanics} \bibinfo{volume}{24},
  \bibinfo{pages}{3--21}.
\newblock \DOIprefix\doi{10.1007/BF01228790}.
%Type = Article
\bibitem[{Gooding(1990)}]{gooding1990procedure}
\bibinfo{author}{Gooding, R.}, \bibinfo{year}{1990}.
\newblock \bibinfo{title}{A procedure for the solution of {L}ambert's orbital
  boundary-value problem}.
\newblock \bibinfo{journal}{Celestial Mechanics and Dynamical Astronomy}
  \bibinfo{volume}{48}, \bibinfo{pages}{145--165}.
\newblock \DOIprefix\doi{10.1007/BF00049511}.
%Type = Book
\bibitem[{Hohmann(1960)}]{in1}
\bibinfo{author}{Hohmann, W.}, \bibinfo{year}{1960}.
\newblock \bibinfo{title}{The attainability of heavenly bodies}.
\newblock \bibinfo{number}{44}, \bibinfo{publisher}{National Aeronautics and
  Space Administration}.
%Type = Article
\bibitem[{Izzo(2015)}]{izzo2015revisiting}
\bibinfo{author}{Izzo, D.}, \bibinfo{year}{2015}.
\newblock \bibinfo{title}{Revisiting lambert’s problem}.
\newblock \bibinfo{journal}{Celestial Mechanics and Dynamical Astronomy}
  \bibinfo{volume}{121}, \bibinfo{pages}{1--15}.
\newblock \DOIprefix\doi{10.1007/s10569-014-9587-y}.
%Type = Article
\bibitem[{Kechichian(1997)}]{in18}
\bibinfo{author}{Kechichian, J.A.}, \bibinfo{year}{1997}.
\newblock \bibinfo{title}{The algorithm of the two-impulse time-fixed
  noncoplanar rendezvous with drag and oblateness effects}.
\newblock \bibinfo{journal}{Astrodynamics 1997} , \bibinfo{pages}{685--706}.
%Type = Article
\bibitem[{Lawden(1992)}]{in4}
\bibinfo{author}{Lawden, D.F.}, \bibinfo{year}{1992}.
\newblock \bibinfo{title}{Optimal transfers between coplanar elliptical
  orbits}.
\newblock \bibinfo{journal}{Journal of Guidance, Control, and Dynamics}
  \bibinfo{volume}{15}, \bibinfo{pages}{788--791}.
\newblock \DOIprefix\doi{10.2514/3.20909}.
%Type = Article
\bibitem[{Leeghim and Jaroux(2010)}]{leeghim2010energy}
\bibinfo{author}{Leeghim, H.}, \bibinfo{author}{Jaroux, B.A.},
  \bibinfo{year}{2010}.
\newblock \bibinfo{title}{Energy-optimal solution to the {L}ambert problem}.
\newblock \bibinfo{journal}{Journal of Guidance, Control, and Dynamics}
  \bibinfo{volume}{33}, \bibinfo{pages}{1008--1010}.
\newblock \DOIprefix\doi{10.2514/1.46606}.
%Type = Article
\bibitem[{Li et~al.(2010)Li, Luo, Tang et~al.}]{in21}
\bibinfo{author}{Li, H.y.}, \bibinfo{author}{Luo, Y.Z.}, \bibinfo{author}{Tang,
  G.J.}, et~al., \bibinfo{year}{2010}.
\newblock \bibinfo{title}{Optimal multi-objective linearized impulsive
  rendezvous under uncertainty}.
\newblock \bibinfo{journal}{Acta Astronautica} \bibinfo{volume}{66},
  \bibinfo{pages}{439--445}.
\newblock \DOIprefix\doi{10.1016/j.actaastro.2009.06.019}.
%Type = Article
\bibitem[{Lion and Handelsman(1968)}]{in2}
\bibinfo{author}{Lion, P.}, \bibinfo{author}{Handelsman, M.},
  \bibinfo{year}{1968}.
\newblock \bibinfo{title}{Primer vector on fixed-time impulsive trajectories.}
\newblock \bibinfo{journal}{AIAA Journal} \bibinfo{volume}{6},
  \bibinfo{pages}{127--132}.
\newblock \DOIprefix\doi{10.2514/3.4452}.
%Type = Article
\bibitem[{Prussing(2000)}]{in14}
\bibinfo{author}{Prussing, J.E.}, \bibinfo{year}{2000}.
\newblock \bibinfo{title}{A class of optimal two-impulse rendezvous using
  multiple-revolution {L}ambert solutions}.
\newblock \bibinfo{journal}{Journal of Astronautical Sciences}
  \bibinfo{volume}{48}, \bibinfo{pages}{131--148}.
%Type = Article
\bibitem[{Prussing and Chiu(1986)}]{in3}
\bibinfo{author}{Prussing, J.E.}, \bibinfo{author}{Chiu, J.H.},
  \bibinfo{year}{1986}.
\newblock \bibinfo{title}{Optimal multiple-impulse time-fixed rendezvous
  between circular orbits}.
\newblock \bibinfo{journal}{Journal of Guidance, Control, and Dynamics}
  \bibinfo{volume}{9}, \bibinfo{pages}{17--22}.
\newblock \DOIprefix\doi{10.2514/3.20060}.
%Type = Article
\bibitem[{Russell(2019)}]{russell2019solution}
\bibinfo{author}{Russell, R.P.}, \bibinfo{year}{2019}.
\newblock \bibinfo{title}{On the solution to every {L}ambert problem}.
\newblock \bibinfo{journal}{Celestial Mechanics and Dynamical Astronomy}
  \bibinfo{volume}{131}, \bibinfo{pages}{50}.
\newblock \DOIprefix\doi{10.1007/s10569-019-9927-z}.
%Type = Article
\bibitem[{Sanatifar and Capuzzo-Dolcetta(2019)}]{in10}
\bibinfo{author}{Sanatifar, M.}, \bibinfo{author}{Capuzzo-Dolcetta, R.},
  \bibinfo{year}{2019}.
\newblock \bibinfo{title}{Search-based method optimization applied to
  bi-impulsive orbital transfer}.
\newblock \bibinfo{journal}{Acta Astronautica}
  \DOIprefix\doi{10.1016/j.actaastro.2019.03.015}.
%Type = Article
\bibitem[{Shakouri(2019)}]{in7p}
\bibinfo{author}{Shakouri, A.}, \bibinfo{year}{2019}.
\newblock \bibinfo{title}{On the impulsive formation control of spacecraft
  under path constraints}.
\newblock \bibinfo{journal}{IEEE Transactions on Aerospace and Electronic
  Systems} \bibinfo{volume}{55}, \bibinfo{pages}{3292--3302}.
\newblock \DOIprefix\doi{10.1109/TAES.2019.2902451}.
%Type = Article
\bibitem[{Shakouri et~al.(2019a)Shakouri, Kiani and Pourtakdoust}]{in22}
\bibinfo{author}{Shakouri, A.}, \bibinfo{author}{Kiani, M.},
  \bibinfo{author}{Pourtakdoust, S.H.}, \bibinfo{year}{2019}a.
\newblock \bibinfo{title}{Covariance-based multiple-impulse rendezvous design}.
\newblock \bibinfo{journal}{IEEE Transactions on Aerospace and Electronic
  Systems} \bibinfo{volume}{55}, \bibinfo{pages}{2128--2137}.
\newblock \DOIprefix\doi{10.1109/TAES.2018.2882939}.
%Type = Article
\bibitem[{Shakouri et~al.(2019b)Shakouri, Kiani and Pourtakdoust}]{in9}
\bibinfo{author}{Shakouri, A.}, \bibinfo{author}{Kiani, M.},
  \bibinfo{author}{Pourtakdoust, S.H.}, \bibinfo{year}{2019}b.
\newblock \bibinfo{title}{A new shape-based multiple-impulse strategy for
  coplanar orbital maneuvers}.
\newblock \bibinfo{journal}{Acta Astronautica} \bibinfo{volume}{161},
  \bibinfo{pages}{200--208}.
\newblock \DOIprefix\doi{10.1016/j.actaastro.2019.05.004}.
%Type = Article
\bibitem[{Shen and Tsiotras(2003)}]{in15}
\bibinfo{author}{Shen, H.}, \bibinfo{author}{Tsiotras, P.},
  \bibinfo{year}{2003}.
\newblock \bibinfo{title}{Optimal two-impulse rendezvous using
  multiple-revolution lambert solutions}.
\newblock \bibinfo{journal}{Journal of Guidance, Control, and Dynamics}
  \bibinfo{volume}{26}, \bibinfo{pages}{50--61}.
\newblock \DOIprefix\doi{10.2514/2.5014}.
%Type = Article
\bibitem[{Sim{\'o}(1973)}]{simo1973solucion}
\bibinfo{author}{Sim{\'o}, C.}, \bibinfo{year}{1973}.
\newblock \bibinfo{title}{Soluci{\'o}n al problema de {L}ambert mediante
  regularizaci{\'o}n}.
\newblock \bibinfo{journal}{Collectanea Mathematica, 1973, vol. 24, n{\'u}m. 3,
  p. 231-248} .
%Type = Article
\bibitem[{Taur et~al.(1995)Taur, Coverstone-Carroll and Prussing}]{in5}
\bibinfo{author}{Taur, D.R.}, \bibinfo{author}{Coverstone-Carroll, V.},
  \bibinfo{author}{Prussing, J.E.}, \bibinfo{year}{1995}.
\newblock \bibinfo{title}{Optimal impulsive time-fixed orbital rendezvous and
  interception with path constraints}.
\newblock \bibinfo{journal}{Journal of Guidance, Control, and Dynamics}
  \bibinfo{volume}{18}, \bibinfo{pages}{54--60}.
\newblock \DOIprefix\doi{10.2514/3.56656}.
%Type = Book
\bibitem[{Vallado(2001)}]{vallado}
\bibinfo{author}{Vallado, D.A.}, \bibinfo{year}{2001}.
\newblock \bibinfo{title}{Fundamentals of astrodynamics and applications}.
  volume~\bibinfo{volume}{12}.
\newblock \bibinfo{publisher}{Springer Science \& Business Media}.
%Type = Article
\bibitem[{Wenzel and Prussing(1996)}]{in6}
\bibinfo{author}{Wenzel, R.S.}, \bibinfo{author}{Prussing, J.E.},
  \bibinfo{year}{1996}.
\newblock \bibinfo{title}{Preliminary study of optimal thrust-limited
  path-constrained maneuvers}.
\newblock \bibinfo{journal}{Journal of Guidance, Control, and Dynamics}
  \bibinfo{volume}{19}, \bibinfo{pages}{1303--1309}.
\newblock \DOIprefix\doi{10.2514/3.21786}.
%Type = Article
\bibitem[{Zhang et~al.(2010)Zhang, Mortari and Zhou}]{in16}
\bibinfo{author}{Zhang, G.}, \bibinfo{author}{Mortari, D.},
  \bibinfo{author}{Zhou, D.}, \bibinfo{year}{2010}.
\newblock \bibinfo{title}{Constrained multiple-revolution {L}ambert's problem}.
\newblock \bibinfo{journal}{Journal of Guidance, Control, and Dynamics}
  \bibinfo{volume}{33}, \bibinfo{pages}{1779--1786}.
\newblock \DOIprefix\doi{10.2514/1.49683}.
%Type = Article
\bibitem[{Zimmer et~al.(2010)Zimmer, Ocampo and Bishop}]{in20}
\bibinfo{author}{Zimmer, S.}, \bibinfo{author}{Ocampo, C.},
  \bibinfo{author}{Bishop, R.}, \bibinfo{year}{2010}.
\newblock \bibinfo{title}{Reducing orbit covariance for continuous thrust
  spacecraft transfers}.
\newblock \bibinfo{journal}{IEEE Transactions on Aerospace and Electronic
  Systems} \bibinfo{volume}{46}, \bibinfo{pages}{771--791}.
\newblock \DOIprefix\doi{10.1109/TAES.2010.5461656}.

\end{thebibliography}

\end{document}